\theoremstyle{thmstyleone}%
\newtheorem{theorem}{Theorem}
\newtheorem{lemma}[theorem]{Lemma}
\newtheorem{corollary}[theorem]{Corollary}
\newtheorem{proposition}[theorem]{Proposition}%
\theoremstyle{thmstyletwo}%
\theoremstyle{thmstylethree}%
\newtheorem{definition}{Definition}%
\begin{document}

\title[Characterising Solutions of Anomalous Cancellation]{Characterising Solutions of Anomalous Cancellation}


\author*[1]{\fnm{Satvik} \sur{Saha}}\email{ss19ms154@iiserkol.ac.in}

\author[2]{\fnm{Sohom} \sur{Gupta}}\email{sg19ms141@iiserkol.ac.in}

\author[1]{\fnm{Sayan} \sur{Dutta}}\email{sd19ms148@iiserkol.ac.in}

\author[1]{\fnm{Sourin} \sur{Chatterjee}}\email{sc19ms170@iiserkol.ac.in}

\affil[1]{\orgdiv{Department of Mathematics and Statistics}, \orgname{Indian Institute of Science Education and Research, Kolkata}, \orgaddress{\country{India}}}

\affil[2]{\orgdiv{Department of Physical Sciences}, \orgname{Indian Institute of Science Education and Research, Kolkata}, \orgaddress{\country{India}}}


\abstract{
    Anomalous cancellation of fractions is a mathematically inaccurate method where cancelling the common digits of the numerator and denominator correctly reduces it. While it appears to be \emph{accidentally} successful, the property of anomalous cancellation is intricately connected to the number of digits of the denominator as well as the base in which the fraction is represented. Previous work have been mostly surrounding three digit solutions or specific properties of the same. This paper seeks to get general results regarding the structure of numbers that follow the cancellation property (denoted by $P^*_{\ell; k}$) and an estimate of the total number of solutions possible in a given base representation. In particular, interesting properties regarding the \emph{saturation} of the number of solutions in general and $p^n$ bases (where $p$ is a prime) have been studied in detail.%
}

\keywords{Anomalous cancellation, Diophantine equation}



\maketitle

\section{Introduction}\label{sec1}





A zeal for some interesting mathematical problems brought us to a very peculiar problem, a quest to find all odd digit integers $[a_1a_2\ldots a_{2k+1}]$ (all $a_i$'s are digits, $a_1\neq 0$) such that the following property holds.
\[[a_1a_2\ldots a_k]\cdot [a_{k+1}a_{k+2}\ldots a_{2k+1}] = [a_1a_2\ldots a_{k+1}]\cdot [a_{k+2}a_{k+2}\ldots a_{2k+1}]\]

An elementary example is the number 164, which has the property as shown $1\times 64 = 16\times 4$. Similarly $24\times 996 = 249\times 96$ implies that $24996$ also fits our problem requirement. Now the question is, can one find a way to generate all such numbers? A brute force algorithm always works, but it is never satisfying to leave things at that -- a proof of being a mathematics aspirant. This prompted us to go through existing literature which, though scarce, hide a gold mine of information. Adding on to previous work, we arrived at several interesting results that demand a place in this paper. The paper has been structured in a format that the general reader can be presented with all the beautiful results, while the seasoned readers can move to the appendix to get a flavour of the relevant proofs. All proofs use elementary number theory techniques and hence this paper is directed at undergraduate students and beyond. \\

The anomalous cancellation property for fractions of the form $[ab]_B / [bc]_B$, where
the numerator and denominator are two digit integers in base $B$, has been studied
extensively \cite{boas}. Finding such fractions amounts to solving the following
Diophantine equation in $a, b, c$. \[
    (aB + b)c = a(bB + c).
\] 

We shall quickly state some nice results in literature, to keep readers up to date.

Apart from trivial solutions such as $a = b = c$ or $b = c = 0$, it has been shown
that $b$ is the largest digit. With this, the transformation $(a, b, c) \mapsto 
(b - c, b, b - a)$ is an involution on the set of non-trivial solutions \cite{yt}.
Ekhad \cite{ekh} has examined a much more general anomalous cancellation property
where a digit is allowed to be cancelled from anywhere in the numerator and 
denominator. Fixing the base, denominator, and the digit indices used during 
cancellation reduces the problem of finding all solutions to a linear Diophantine
equation, which can be solved easily.

It should be good to go dive in our work now. For no future confusion, we shall take some time to sort out the notations used frequently. All other symbols have their usual meaning.

\paragraph{Notation.} Let $B \geq 2$ be an integer base, and let $x_1, x_2, \dots, x_k$ be
integer digits, with each $0 \leq x_i < B$. Then, we denote their concatenation
as \[
    x = [x_1x_2\dots x_k] \;\equiv\; \sum_{i = 1}^k x_i B^{k - i}.
\] In our problem, we consider natural numbers $N_{\ell; k}$ in base $B$, 
where $\ell, k \geq 1$. Its digits are denoted as \[
    N_{\ell; k} = [a_1a_2\dots a_\ell \,b\, c_1c_2\dots c_k] = [a b c],
\] with blocks \[
    a = [a_1a_2\dots a_\ell], \qquad
    c = [c_1c_2\dots c_k].
\] Note that we may write \[
    N_{\ell; k} = aB^{k + 1} + bB^k + c.
\]

\begin{definition}
    We say that the number $N_{\ell; k}$ has property $P_{\ell; k}$,
    or is a solution of $P_{\ell; k}$, if \[
        [a_1a_2 \dots a_\ell\, b] \times [c_1c_2 \dots c_k] =
        [a_1a_2 \dots a_\ell] \times [b\, c_1c_2 \dots c_k].
    \] Equivalently, \[
        \frac{[a_1a_2\dots a_\ell\, \cancel{b}]}{
        [\cancel{b}c_1c_2\dots c_k]} = \frac{[a_1a_2\dots a_\ell]}{
        [c_1c_2\dots c_k]}
    \] which means that the digit 'b' can be anomalously cancelled in the LHS fraction to \emph{correctly} reduce to the RHS equation. This can also be expressed as the homogeneous Diophantine equations \[
        (aB + b)c = a(bB^k + c), \qquad \frac{1}{a} + \frac{B - 1}{b} = \frac{B^k}{c}.
    \] In the special case $\ell = k$, we abbreviate $N_{k; k} \equiv N_k$ and 
    $P_{k; k} \equiv P_k$.
\end{definition}

\paragraph{Convention.}
Certain types of solutions of $P_{\ell; k}$ exist given any base
$B$. These are of the following forms. \begin{itemize}
    \item $[dd\dots dd]$.
    \item $[a_1a_2\dots a_k\,0\,0\dots 0]$ (in particular, any two of $a, b, c = 0$.)
\end{itemize}
We term these as \emph{trivial solutions}.
In our discussion, we ignore such solutions, and focus on the case $\ell = k$,
i.e.\ we are searching for non-trivial solutions to the $P_k$ problem.

\begin{definition}
    We say that $N_{\ell; k}$ has property $P^*_{\ell; k}$ if it has $P_{\ell; k}$
    non-trivially, i.e\ no two of $a, b, c = 0$ and not all digits $a_i = b = c_i$.
\end{definition}

\section{Characterizing how solutions look like.} \label{section:patterns}

The first important observation is that solutions of $P_{\ell; k}$ can be
extended very simply to solutions of $P_{\ell + 1; k + 1}$, by replacing
the central digit $b$ with the digits $[b b b]$. This is a special case
of the Pumping Lemma described in \cite{gold}.

\begin{proposition} \label{proposition:extension}
    Let $B$ be an arbitrary integer base, and let \[
        N = [a_1a_2\dots a_\ell \,b\, c_1c_2\dots c_k], \qquad
        N^+ = [a_1a_2\dots a_\ell \,b\: b\: b\, c_1c_2\dots c_k].
    \] Then, $N$ has property $P_{\ell; k}$ if and only if $N^+$ has property
    $P_{\ell + 1; k + 1}$. We say that $N^+$ is an \emph{extension} of $N$.
\end{proposition}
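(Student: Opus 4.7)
The plan is to prove the equivalence by a direct algebraic computation: identify how $N^+$ decomposes into blocks suitable for $P_{\ell+1; k+1}$, substitute these into the Diophantine equation stated in the definition, and then reduce until the original $P_{\ell; k}$ equation reappears as a factor.

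First I would fix notation so that $a = [a_1 \dots a_\ell]$ and $c = [c_1 \dots c_k]$, in which case $P_{\ell; k}$ for $N$ is the relation $(aB + b)c = a(bB^k + c)$. Then I would identify the corresponding blocks of $N^+$ relative to the $P_{\ell+1; k+1}$ decomposition: the leading block of length $\ell + 1$ is $[a_1 \dots a_\ell\, b] = aB + b$, the central digit is $b$, and the trailing block of length $k+1$ is $[b\, c_1 \dots c_k] = bB^k + c$. Denoting these $a' := aB + b$, $b' := b$, $c' := bB^k + c$, the property $P_{\ell+1; k+1}$ for $N^+$ becomes the relation $(a' B + b')\, c' = a' (b' B^{k+1} + c')$.

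Next I would expand both sides of this relation in the original variables $a, b, c, B$ and simplify the difference between them. The expectation is that, after cancellation, the residue factors cleanly as $B \bigl[(aB + b)c - a(bB^k + c)\bigr]$. Since $B \geq 2$ is nonzero, this residue vanishes if and only if the $P_{\ell; k}$ equation for $N$ holds, which delivers both directions of the equivalence simultaneously.

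The main obstacle, such as it is, is just careful bookkeeping during the expansion --- in particular ensuring the exponents of $B$ are tracked correctly (the block length on the right shifts from $k$ to $k+1$, so $bB^k$ is replaced by $b' B^{k+1}$). There is no new number-theoretic content here: the proposition boils down to the algebraic observation that the anomalous-cancellation identity is invariant under inserting a repeated ``$bb$'' around the middle digit, and this invariance is witnessed by the clean factor of $B$ that emerges after simplification.
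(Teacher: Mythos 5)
Your proposal is correct and takes essentially the same route as the paper: the paper likewise identifies the blocks of $N^+$ as $aB+b$, $b$, $bB^k+c$, writes out $(aB^2+bB+b)(bB^k+c)=(aB+b)(bB^{k+1}+bB^k+c)$, and cancels to recover $(aB+b)cB = aB(bB^k+c)$, i.e.\ exactly the factor of $B$ times the $P_{\ell;k}$ identity that you predict. The factorization you anticipate does indeed emerge, so carrying out the bookkeeping completes the proof as stated.
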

\begin{proof}
    The number $N^+$ has property $P_{\ell + 1; k + 1}$ precisely when \[
        (aB^2 + bB + b)(bB^k + c) = (aB + b)(bB^{k + 1} + bB^k + c),
    \] which after expanding and cancelling reduces to \[
        (aB + b)cB = aB(bB^k + c), \qquad (aB + b)c = a(bB^k + c),
    \] which is precisely the statement that $N$ has property $P_{\ell; k}$.
\end{proof}

This shows that the number of solutions of $P^*_k$
in a particular base $B$ cannot decrease with increasing $k$. A natural question is
whether the number of such solutions gets arbitrarily large; can we keep producing
solutions of the $P_k$ problem that aren't merely extensions of old ones? To answer
this, we show that the digits of any such solution must obey very rigid rules,
culminating in the following.

\begin{theorem} \label{theorem:structure}
    Let $B$ be an arbitrary integer base and let $N_k$ have property 
    $P^*_k$.
    Then, the digits must satisfy the following constraints.
    \begin{enumerate}
        \item $a_1 < B / 2$.
        \item $b = c_1 = c_2 = \dots = c_{k - 1} > c_k > 1$.
        \item $\gcd(c_k, B) > 1$.
        \item $\gcd(a_k - b, B) > 1$ if $a_k \neq b$.
    \end{enumerate}
\end{theorem}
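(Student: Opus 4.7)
The plan is to derive all four assertions from the fundamental rearrangement $c \cdot d = abB^k$ with $d = a(B-1) + b$, obtained by routine algebra from $(aB+b)c = a(bB^k+c)$.

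For (2), I would read off the digits of $c$ inductively from the formula $c = abB^k/d$. Since $k \geq 2$ automatically gives $a \geq B^{k-1} > b$, the bounds $bB^{k-1} \leq c < (b+1)B^{k-1}$ rearrange into $a \geq b$ (clear) and $a(b - B + 1) < b(b+1)$ (clear because $b \leq B-1$ makes the left side nonpositive), giving $c_1 = b$. Subtracting the leading $b$ yields the clean recursion $c^{(j)} d = bB^{k-j}(a - s_j b)$ with $c^{(j)} = c - bB^{k-j} s_j$ and $s_j = (B^j - 1)/(B-1)$; the same estimates then give $c_{j+1} = b$ provided $a \geq b s_{j+1}$, which holds automatically for $j+1 \leq k-1$ because $a \geq B^{k-1} \geq B^{j+1} > b s_{j+1}$. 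For the last digit the upper bound still forces $c_k \leq b$, but $c_k = b$ would require $a = b s_k$, in which case $c = b s_k$ too, collapsing to the trivial all-$b$ solution; so $c_k < b$ in the non-trivial case.

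For (3), I would argue by contradiction: if $\gcd(c_k, B) = 1$ then $\gcd(c, B) = 1$, so $cd = abB^k$ forces $B^k \mid d$. Since $d \leq (B-1)B^k$, we have $d = jB^k$ for some $j \in \{1, \ldots, B-1\}$; reducing modulo $B-1$ forces $b = j$, which unpacks to $a = c = b s_k$, the trivial solution. For (4), reducing the defining equation modulo $B$ gives $B \mid c_k(a_k - b)$; setting $g = \gcd(a_k - b, B)$ when $a_k \neq b$ yields $B/g \mid c_k$, forcing $g > 1$ because $0 < c_k < B$. The remaining part of (2), that $c_k > 1$, follows from (3) together with the observation that $c_k = 0$ would, via the $j = k-1$ case of the recursion, force $a = s_{k-1} b < B^{k-1}$, contradicting the digit count of $a$.

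The main obstacle is property (1). My plan is to prove the stronger bound $2a < B^k$, which immediately implies $a_1 < B/2$. Using the explicit formula $a = b(bBs_{k-1} + c_k)/u$ with $u = Be_k + c_k$ and $e_k = b - c_k$, the target rearranges to $B^k[2b^2 - (B-1)u] < 2bu$, trivial in the easy case $2b^2 \leq (B-1)u$. The subtle case $2b^2 > (B-1)u$ translates into $c_k > b(B^2 - B - 2b)/(B-1)^2$, and the crux — which I expect to be the hardest technical step — is to show this lower bound is at least $b - 1$ for every $b \leq B - 1$; combined with $c_k < b$ from (2), this rules out any valid integer digit. The verification reduces to the inequality $(b - (B-1))(2b + B - 1) \leq 0$, which factors cleanly and holds throughout the allowed range.
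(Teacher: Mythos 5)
Your argument is correct, but it reaches constraints (1)--(3) by routes genuinely different from the paper's. The paper works throughout with the quotient $d = bc/a = bB^k-(B-1)c$: it obtains the digit pattern $b=c_1=\dots=c_{k-1}>c_k$ by expanding $bB^k+c=cB+d$ and equating base-$B$ coefficients, with a case split on the carry in $d'+q$ (Lemma~\ref{lemma:last_block}); it obtains $a<B^k/2$ by splitting on whether $d\ge 2B-2$ (Lemma~\ref{lemma:leading_digit}); and it proves $\gcd(c_k,B)>1$ via prime factorisation together with an induction on $k$ that strips a digit pair using the extension lemma (Lemma~\ref{lemma:gcds}). You instead work with the complementary identity $c\,\bigl(a(B-1)+b\bigr)=abB^k$: the digit pattern comes from peeling leading digits off $c$ via the recursion $c^{(j)}\bigl(a(B-1)+b\bigr)=bB^{k-j}(a-s_jb)$ and interval estimates, which avoids the carry case-split entirely; the bound $2a<B^k$ comes from the closed form for $a$ in terms of $(b,c_k)$ (the paper's Corollary~\ref{corollary:ac_blocks}) plus the factorisation $(b-(B-1))(2b+B-1)\le 0$, which shows the hard subcase is vacuous rather than estimating it; and $\gcd(c_k,B)>1$ follows in one stroke from $B^k\mid a(B-1)+b$ and reduction modulo $B-1$, with no induction --- arguably the cleanest of the three departures. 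Part (4) coincides with the paper's argument. What each approach buys: yours is more uniform (a single identity drives everything) and markedly shorter on (3); the paper's coefficient-comparison method transfers directly to the uneven case $\ell<k$ that it also needs (Lemma~\ref{lemma:block_size}), and its induction exercises the extension machinery used elsewhere. The only small items to patch in your write-up are the $k=1$ instance of (2) (your recursion is phrased for $k\ge 2$, though the same final-digit estimate covers $k=1$) and an explicit appeal to $a,b,c>0$ via Lemma~\ref{lemma:trivial_1} before dividing by these quantities.
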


The proof of Theorem~\ref{theorem:structure} will follow by combining
Corollary~\ref{corollary:last_digit} and Lemmas~\ref{lemma:leading_digit},
\ref{lemma:last_block}, \ref{lemma:gcds}, which we prove later.

An immediate consequence is that all solutions of $P^*_k$ look like \[
    [a_1a_2\dots a_k \,b\, b\dots b c_k].
\] This means that the blocks are of the form \[
    a = \frac{bM}{B} + \frac{bc_kB^{k - 1}}{bB - (B - 1)c_k}, \qquad 
    c = bM + c_k, \qquad
    M = \frac{B^k - B}{B - 1}.
\]

Theorem~\ref{theorem:structure} guarantees that when the base $B$ is even, plugging
in $b = B - 1$, $c_k = B - 2$ corresponds to the largest solution of $P^*_k$, \[
    a = \frac{1}{2}B^k - 1, \qquad b = B - 1, \qquad c = B^k - 2.
\] For example, in base $B = 10$, the largest solution of $P^*_3$ is $4999998$.

Theorem~\ref{theorem:structure} also gives us a nice criterion for determining whether
solutions of $P^*_k$ exist in the first place.

\begin{theorem} \label{theorem:prime_composite}
    The $P^*_k$ problem admits solutions if and only if the base $B$ is composite.
\end{theorem}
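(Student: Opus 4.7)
The statement has two implications. The forward direction -- a $P^*_k$ solution forces $B$ composite -- falls out of Theorem~\ref{theorem:structure}: parts (2) and (3) of that theorem give $1 < c_k < B$ together with $\gcd(c_k, B) > 1$, so if $B$ were prime every element of $\{1, \ldots, B - 1\}$ would be coprime to $B$, producing an immediate contradiction. This step is essentially a one-line observation.

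For the converse, my plan is to first reduce to the case $k = 1$ via Proposition~\ref{proposition:extension}: exhibiting a single non-trivial $P^*_1$ solution suffices, since iterated extension then yields a $P^*_k$ solution for every $k \geq 1$. Given a composite base $B$, let $p$ be its smallest prime factor and write $B = pq$ with $q \geq 2$. I propose the explicit triple
\[
    (a, b, c) = (q - 1,\ B - 1,\ B - p),
\]
which generalizes the extremal even-base solution $(B/2 - 1,\ B - 1,\ B - 2)$ highlighted after Theorem~\ref{theorem:structure} (that is precisely the $p = 2$ case). Verification then reduces to substituting into $(aB + b)c = a(bB + c)$ and checking that both sides collapse to the common value $p(q - 1)(pq^2 - 1)$, together with noting that $a, b, c$ are all nonzero and $a \neq b$ so the resulting number is non-trivial.

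The main obstacle is guessing the right parametrized family; the extremal even-base solution supplies the hint, and the leap is realizing that replacing the special factor $2$ by the smallest prime factor $p$ of $B$ preserves the entire structure. Once the family is identified, the rest is a direct algebraic identity together with an appeal to Proposition~\ref{proposition:extension} (noting that extension preserves non-triviality, as already observed immediately after that proposition).
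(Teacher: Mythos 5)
Your proposal is correct and follows essentially the same route as the paper: the forward direction is the identical appeal to Theorem~\ref{theorem:structure}, and your triple $(q-1,\,B-1,\,B-p)$ is exactly the $k=1$ instance of the paper's general-$k$ family $(mB^{k-1}-1,\,B-1,\,B^k-n)$ with $m=q$, $n=p$, so that extending it via Proposition~\ref{proposition:extension} reproduces the paper's solutions verbatim. The only cosmetic difference is that the paper writes the general-$k$ triple directly while you verify $k=1$ and extend; your algebraic check (both sides equal $p(q-1)(pq^2-1)$) is correct.
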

\begin{proof}
    If the base $B = mn$ for $m, n > 1$, then $N_k = [abc]$ with \[
        (a, b, c) = (mB^{k - 1} - 1, B - 1, B^k - n)
    \] has property $P^*_k$.
    Conversely, if the base $B = p$ for prime $p$, then any solution $N_k$ of 
    $P^*_k$ must satisfy $0 < c_k < p$ and $\gcd(c_k, p) > 1$ simultaneously by
    Proposition~\ref{theorem:structure}, a contradiction.
\end{proof}

\section{Saturation of solutions.}

In Section~\ref{section:patterns}, we have shown that every candidate solution of
$P^*_k$ can be completely written in terms of the two single digits $b$ and $c_k$.
Here, we say that $(b, c_k)$ \emph{generates} such a solution of $P^*_k$.
Figure~\ref{fig:solution_space} visualizes the solution space of $P^*_{101}$ in
base $B = 126$ via such generating tuples.
Thus, finding solutions amounts to testing pairs $(b, c_k)$, with $1 < c_k < b < B$.
Importantly, this search space depends only on $B$, not $k$(!)
Thus, for a given base $B$, the number of solutions of $P^*_k$ \emph{saturates}.

\begin{proposition} \label{proposition:number_pk}
    Let $B$ be an arbitrary integer base. There are at most \[
        \frac{(B - 2)(B - 3)}{2}
    \] solutions of $P^*_k$, for any $k \geq 1$.
\end{proposition}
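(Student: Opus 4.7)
The plan is to leverage Theorem~\ref{theorem:structure} to reduce counting solutions of $P^*_k$ to counting pairs of digits $(b, c_k)$.

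First, I would argue that any solution $N_k$ of $P^*_k$ is uniquely determined by the pair $(b, c_k)$. Part (2) of Theorem~\ref{theorem:structure} forces $c_1 = c_2 = \cdots = c_{k-1} = b$, so the block $c = bM + c_k$ with $M = (B^k - B)/(B - 1)$ depends only on $(b, c_k)$. The block $a$ is then pinned down by the underlying Diophantine equation $(aB + b)c = a(bB^k + c)$, via the explicit formula displayed right after Theorem~\ref{theorem:structure}. Hence the number of solutions of $P^*_k$ is at most the number of admissible generating pairs $(b, c_k)$.

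Next, I would read off the range constraints from the same theorem: part (2) also gives $1 < c_k < b$, and since $b$ is a base-$B$ digit, $b \leq B - 1$. So admissible pairs lie in the range $2 \leq c_k < b \leq B - 1$. The count is then routine: for each $c_k \in \{2, \ldots, B - 2\}$ there are $B - 1 - c_k$ choices for $b$, and summing gives $\sum_{j = 1}^{B - 3} j = (B - 2)(B - 3)/2$.

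There is essentially no obstacle here --- all the heavy lifting has been done in Theorem~\ref{theorem:structure}. The only step deserving care is the uniqueness argument showing that each pair $(b, c_k)$ yields at most one $N_k$, since this is precisely what converts the pair count into an \emph{upper} bound on the solution count. One could obtain a strictly tighter bound by additionally enforcing the gcd conditions in parts (3) and (4) of Theorem~\ref{theorem:structure}, but the present proposition demands only this weaker inequality, so no such refinement is needed.
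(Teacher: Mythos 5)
Your proposal is correct and follows the paper's approach exactly: both reduce the count to generating pairs $(b, c_k)$ with $2 \leq c_k < b \leq B - 1$ via Theorem~\ref{theorem:structure} and the explicit formula for $a$, then sum to get $(B-2)(B-3)/2$. Your added remark on why each pair determines at most one solution makes explicit a step the paper leaves implicit, but the argument is the same.
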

\begin{proof}
    For each $b$, we have $2 \leq c_k < b$, i.e.\ $b - 2$ choices. Putting 
    $3 \leq b < B$, we obtain a total of $1 + 2 + \dots + (B - 3) = (B - 2)(B - 3) / 2$
    candidate tuples $(b, c_k)$.
\end{proof}

The above estimate is clearly very crude: we have only used the relation
$b = c_{i < k} > c_k$.
Indeed, numerical evidence suggests that a much sharper bound can be established.
To illustrate this, the only solutions of $P^*_k$ for base $B = 10$ are extensions
of the following (in order of appearance with increasing $k$).
\[
    164, \;
    265, \;
    195, \;
    498, \;
    21775, \;
    24996, \;
    1249992, \;
    340277776.
\]

We next describe when the full gamut of solutions for a base $B$ is achieved.

\begin{theorem} \label{theorem:saturation_point}
    Let $B$ be an arbitrary integer base. Then, the number of solutions of 
    $P^*_k$ become constant beyond \[
        k = \max\{5,\,2\log_2{(B-1)} + 2\}.
    \]
\end{theorem}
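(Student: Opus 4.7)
The plan is to quantify, for each candidate generating pair $(b, c_k)$ with $2 \leq c_k < b < B$, the smallest $k$ at which $(b,c_k)$ first yields a valid solution of $P^*_k$, and then maximize this threshold over all pairs that ever yield solutions. Proposition~\ref{proposition:extension} shows that once $(b,c_k)$ produces a solution at some $k_0$, it does so at every $k \geq k_0$, so the solution count is non-decreasing in $k$; combined with the uniform bound in Proposition~\ref{proposition:number_pk}, it must saturate. The task is to bound when.

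Given $(b, c_k)$, the block $a$ is forced to be $A_k = bc/d$ where $d = bB - (B-1)c_k$ and $c = [b\cdots b\,c_k]_B$ has $k$ digits. I would first derive two complementary recurrences: from $c^{(k+1)} = bB^k + c^{(k)}$ one obtains
\[
    A_{k+1} = A_k + \frac{b^2 B^k}{d},
\]
and directly from the extension map one obtains $A_{k+1} = BA_k + b$. Writing $k_* := \min\{k \geq 1 : d \mid b^2 B^k\}$, the first recurrence shows that integrality of $A_k$ is stable from $k_*$ onwards: if $A_{k_*} \in \mathbb{Z}$ then $A_k \in \mathbb{Z}$ for every $k \geq k_*$, and otherwise $A_k \notin \mathbb{Z}$ for every such $k$. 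The second recurrence, applied to integer values of $A_k$, shows that the digit-count condition $A_k \geq B^{k-1}$ is similarly all-or-nothing from $k_*$ onwards. Together these give a clean dichotomy: the pair $(b, c_k)$ either generates a valid solution at $k = k_*$ (and at every subsequent $k$), or never. The first-hit threshold for every pair that ever works is therefore exactly $k_*$.

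It remains to bound $k_*$ uniformly in $(b, c_k)$. By Theorem~\ref{theorem:structure} we have $c_k \geq 2$, so $d \leq (B-1)(B-2) < (B-1)^2$. Factor $d = d_1 d_2$ with $d_1$ supported on primes of $B$ and $\gcd(d_2, B) = 1$; then $d \mid b^2 B^k$ is equivalent to the $k$-independent condition $d_2 \mid b^2$ (which selects which pairs can ever work) together with $k\, v_p(B) \geq v_p(d_1) - 2v_p(b)$ for each $p \mid B$. Since $v_p(B) \geq 1$, the worst case $p = 2$, $v_p(b) = 0$ gives
\[
    k_* \;\leq\; \max_{p \mid B} v_p(d) \;\leq\; \log_2(d) \;<\; 2\log_2(B-1),
\]
comfortably inside the stated bound $2\log_2(B-1) + 2$. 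The ``$5$'' floor is only active when $B \leq 3$, in which case $B$ is prime and admits no solutions at all by Theorem~\ref{theorem:prime_composite}, so it is trivially sufficient.

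The main obstacle I anticipate is justifying the equality $k_1 = k_*$ rather than merely the inequality $k_1 \geq k_*$: the integrality recurrence alone gives the lower bound, and one must combine it with the digit-count recurrence and the monotonicity of Proposition~\ref{proposition:extension} to rule out pairs for which $A_k$ first becomes a valid $k$-digit integer at some $k$ strictly greater than $k_*$. The prime-exponent bookkeeping in the final inequality also needs some care to recover the explicit additive constant~$2$ rather than a larger slack.
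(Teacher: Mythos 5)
Your proposal is correct in outline, and it takes a genuinely different route from the paper's. The paper argues contrapositively on a single step: if $(b,c_k)$ generates a solution of $P^*_k$ but not of $P^*_{k-1}$, then either the divisibility $d \mid bc_kB^{k-2}$ fails (handled by the same kind of $p$-adic bookkeeping you use, yielding $k < 2\log_2(B-1)+2$), or the block $a'$ at level $k-1$ is an integer but smaller than $B^{k-2}$ (handled by a separate algebraic estimate yielding $k < 5$); the stated bound is the max of the two. Your recurrences $A_{k+1} = A_k + b^2B^k/d$ and $A_{k+1} = BA_k + b$ buy you two things the paper does not have. First, the second recurrence shows the paper's second case is actually vacuous: if $A_{k-1} \in \mathbb{Z}$ and $A_{k-1} \leq B^{k-2}-1$, then $A_k = BA_{k-1}+b \leq B^{k-1}-B+b < B^{k-1}$, so a pair whose block is ever "too small" (while integral) stays too small forever and never generates a solution --- the floor of $5$ is never needed. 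Second, bounding the first-hit index by $k_* = \min\{k : d \mid b^2B^k\} \leq \max_{p}v_p(d) \leq \log_2 d < 2\log_2(B-1)$ removes the additive $2$ as well, so your argument, once the details are filled in, proves a slightly sharper theorem. On the point you flag as the main obstacle: the direction you actually need is $k_1 \leq k_*$, not $k_1 = k_*$, and you already have both ingredients --- integrality propagates \emph{down} from $k_1$ to $k_*$ because the increments $b^2B^j/d$ are integers for $j \geq k_*$, and the size condition propagates down by the contrapositive of the all-or-nothing argument above --- so the pair already works at $k_*$, contradicting minimality of $k_1 > k_*$. One small bookkeeping caveat: the integrality condition from Corollary~\ref{corollary:ac_blocks} is $d \mid bc_kB^{k-1}$, whereas $d \mid b^2B^k$ is equivalent to $d \mid (B-1)bc_kB^{k-1}$ (via $b^2B = bd + (B-1)bc_k$); these can differ when $\gcd(d, B-1)>1$, but your recurrence-based dichotomy sidesteps this because in that situation $A_{k_*} \notin \mathbb{Z}$ and the pair is simply dead.
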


This means that for $k$ beyond the above \emph{saturation point}, all solutions of 
$P^*_k$ are mere extensions of old ones. A proof of
Theorem~\ref{theorem:saturation_point} is supplied in Appendix~\ref{section:estimate_saturation}.

\begin{figure}
    \centering
    \includegraphics[scale = 0.5]{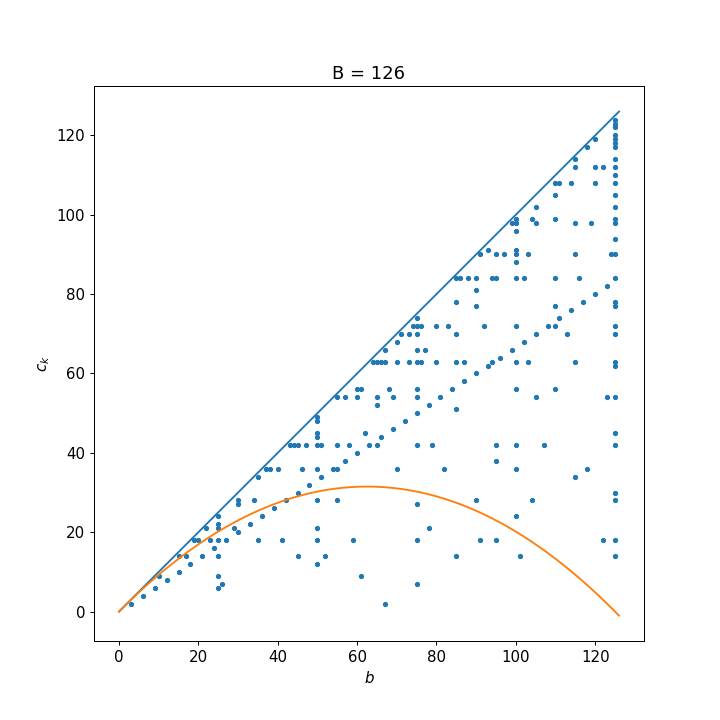}
    \caption{Generating tuples for $P^*_{\ell; k}$ with $B = 126$, $k = 101$, represented by blue dots. Points above the orange curve generate solutions of $P^*_k$ with the candidate block $a \geq B^{k - 1}$. Points underneath the orange curve generate solutions of $P^*_{\ell; k}$ with $\ell < k$.}
    \label{fig:solution_space}
\end{figure}

The bound on $k$ described in Theorem~\ref{theorem:saturation_point} is especially
bad for one family of bases in particular. When $B = p^n$ for prime $p$, $n \geq 2$,
the solutions of $P^*_k$ saturate in the very first step, $k = 1$.

\begin{theorem} \label{theorem:base_pn}
    Let $B = p^n$ for prime $p$. Then, all solutions of $P^*_k$ are extensions of
    those of $P^*_1$.
\end{theorem}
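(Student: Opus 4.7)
The plan is to show that, for $B = p^n$ and any $k \geq 2$, any solution of $P^*_k$ must satisfy $a_k = b$. Combined with the structural constraint $c_1 = \dots = c_{k-1} = b$ from Theorem~\ref{theorem:structure}, this means $N_k$ contains at least three consecutive $b$'s straddling the central position, so by Proposition~\ref{proposition:extension} it is the extension of the $P^*_{k-1}$ solution $[a_1 \dots a_{k-1}\, b\, c_2 \dots c_k]$. Descending inductively on $k$ then identifies every $P^*_k$ solution as an iterated extension of some $P^*_1$ solution.

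First I would rewrite the Diophantine equation in a convenient form. Substituting $c = bM + c_k$ with $M = (B^k - B)/(B-1)$ into $c[a(B-1) + b] = abB^k$ and simplifying yields the pair of identities
\[
    aD = bc, \qquad D[a(B-1) + b] = b^2 B^k, \qquad D := (b - c_k)B + c_k.
\]
Now setting $B = p^n$ and writing $j := v_p(c_k)$, Theorem~\ref{theorem:structure} gives $1 \leq j \leq n-1$ (since $\gcd(c_k, B) > 1$ and $0 < c_k < B$). Because $v_p\bigl((b-c_k)B\bigr) \geq n > j$, we get $v_p(D) = j$ exactly, so $D = p^j D'$ with $\gcd(D', p) = 1$. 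From $D \mid b^2 p^{nk}$ and $\gcd(D', p) = 1$ it follows that $D' \mid b^2$; writing $V := b^2/D'$, the second identity above becomes
\[
    a(p^n - 1) + b = V p^{nk - j}.
\]

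The punchline is reduction modulo $p^n$. Using $a(p^n - 1) \equiv -a \pmod{p^n}$ yields $a \equiv b - V p^{nk-j} \pmod{p^n}$. For $k \geq 2$ the exponent satisfies $nk - j \geq 2n - (n-1) = n + 1 > n$, so $V p^{nk-j} \equiv 0 \pmod{p^n}$ and hence $a \equiv b \pmod{p^n}$. Since $a \equiv a_k \pmod{B}$ and both $a_k, b$ lie in $[0, p^n)$, we conclude $a_k = b$, closing the induction.

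The main obstacle I anticipate is not arithmetic but finding the right reformulation: the identity $D\,[a(B-1)+b] = b^2 B^k$ is what makes the prime-power hypothesis bite, because it lets us split $D$ cleanly into its $p$-part and its coprime-to-$p$ part and thereby force $a(p^n - 1) + b$ to be a pure power of $p$ times a divisor of $b^2$. For a general composite $B$ this factorization spreads across several distinct prime components, and the clean mod-$B$ collapse of $a$ onto $b$ fails, which is precisely why the saturation bound in Theorem~\ref{theorem:saturation_point} is so much weaker outside the $B = p^n$ family.
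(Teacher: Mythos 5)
Your proof is correct, and it takes a genuinely different route from the paper's. The paper (Lemma~\ref{lemma:pn_extension}) works with the integrality of $q = bc_kB^{k-1}/D$, where $D = bB-(B-1)c_k$, carries out a $p$-adic valuation count to show that $bc_k/D$ is already an integer, exhibits $[\,bc_k/D,\ b,\ c_k\,]$ as a solution of $P^*_1$, and then concludes by extending it back up and invoking the fact that $(b,c_k)$ determines the block $a$ uniquely. You instead derive the identity $D\,[a(B-1)+b]=b^2B^k$ (which follows from $aD=bc$ and $c[a(B-1)+b]=abB^k$), observe that $v_p(D)=v_p(c_k)=j\le n-1$ so that $a(p^n-1)+b=Vp^{nk-j}$ with $V=b^2/D'$ an integer and $nk-j>n$ for $k\ge 2$, and read off the congruence $a\equiv b\pmod{p^n}$, hence $a_k=b$; this peels off one extension layer at a time. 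Both arguments ultimately rest on the same arithmetic fact -- that the denominator $D$ has $p$-part exactly $p^{v_p(c_k)}$ -- but your congruence formulation is more local and arguably cleaner: it pins down the single digit $a_k$ directly without constructing the $P^*_1$ seed or appealing to uniqueness of $a$ given $(b,c_k)$, and it makes transparent why the argument collapses for general composite $B$ (the identity no longer forces $a(B-1)+b$ to be a unit multiple of a high power of a single prime). Two small points you leave implicit but which are easily supplied: the reduced number $[a_1\dots a_{k-1}\,b\,c_2\dots c_k]$ is again a \emph{non-trivial} solution (its blocks are nonzero since $a_1\ne 0$ and $c_k\ne 0$, and not all its digits are equal since $b>c_k$), and your valuation claim $v_p(D)=j$ uses $j\le n-1$, which holds because $0<c_k<p^n$.
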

\begin{proof}
    By Lemma~\ref{lemma:pn_extension}, any solution $N_k$ of $P^*_k$ looks like \[
        N_k = [a_1b\dots b\,b\,b\dots bc_k].
    \]
    Proposition~\ref{proposition:extension} can be used $k - 1$ times to reduce this to
    the solution $[a_1bc_k]$ of $P^*_1$.
\end{proof}

For example, the only solutions of $P^*_k$ for base $B = 3^2$ are of the form \[
    14\dots 43, \quad 28\dots 86.
\]

\section{Discussion.}

Our main goal was to characterize all solutions of the anomalous cancellation problem $P^*_k$,
which still seems to be a far way off. However, we have successfully characterized solutions in
certain special bases, and given rudimentary bounds on their number. Our major findings are
summarized below.

\begin{itemize}
    \item There are no solutions of $P^*_k$ for prime bases.
    \item The only solutions of $P^*_k$ for prime-power bases $p^n$ are extensions of solutions
    of $P^*_1$.
    \item There are at least as many solutions of $P^*_k$ for composite bases $B$ as there
    are non-trivial factors of $B$. These are of the form $[abc]$ with \[
        (a, b, c) = (mB^{k - 1} - 1, B - 1, B^k - n)
    \] for $B = mn$, $m, n > 1$, $M = (B^k - B)/(B - 1)$.
\end{itemize}

Some very interesting and peculiar observations were made from numerical solutions; while
these haven't been proved in this paper, there is a lot of scope for future expansion.

\begin{itemize}
    \item Suppose in base $B$, we have no new non-trivial solutions
    (except extensions) in $(2k+1)$ digits. We have observed that there would be no new solutions
    in $(2k+3)$ digits. In other words, as long as the number of solutions do not saturate, they will
    keep on increasing.
    \item Currently we have a quadratic bound on the total number of solutions possible in a given base $B$,
    but observed solution counts are much smaller than that.
    \item An interesting result discussed is that composite bases are guaranteed to have
    solutions that are of the form $(a, B-1, c)$. Therefore, finding out no solutions of
    this form guarantees primality. While the current search space is only as good as a brute
    force method, some better ideas relating $b$ and $c_k$ might lead to a
    much faster primality test.
\end{itemize}

\backmatter


\bmhead{Acknowledgments}

The authors would like to thank Prof.\ Soumya Bhattacharya for his careful reading of the manuscript and many helpful discussions on the topic of this paper.



\begin{appendices}

\section{Trivial solutions.}

This section focuses on trivial solutions of $P_{\ell; k}$. We supply
a few criteria which can be used to swiftly identify candidate solutions
as trivial, based on a subset of their digits.

\begin{lemma} \label{lemma:divisibility}
    Let $N_{\ell; k}$ have property $P_{\ell; k}$. Then, \begin{enumerate}
        \item $a \mid bc$.
        \item $b \mid ac(B - 1)$.
        \item $c \mid abB^k$.
    \end{enumerate}
\end{lemma}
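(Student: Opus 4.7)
The plan is to derive all three divisibility claims from a single source: the defining Diophantine equation $(aB + b)c = a(bB^k + c)$ given in the definition of $P_{\ell; k}$. Expanding this yields the identity $aBc + bc = abB^k + ac$, and each of the three assertions should follow by grouping the terms of this identity around the divisor in question, so that the other side is manifestly a multiple of that divisor.

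For $a \mid bc$, I would isolate $bc$ on one side and collect the rest as a multiple of $a$, obtaining $bc = a(bB^k + c - Bc)$. For $b \mid ac(B - 1)$, I would instead gather the $b$-multiples on the right and the $ac$-terms on the left; moving $ac$ across and combining $aBc - ac$ into $ac(B - 1)$ yields $ac(B - 1) = b(aB^k - c)$. For $c \mid abB^k$, I would isolate the $abB^k$ term and factor $c$ out of the remainder to get $abB^k = c(aB + b - a)$. In every case, the claimed divisibility is immediate from the factorisation on the right-hand side.

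There is no genuine obstacle here: the lemma is a direct consequence of the definition, and the only "choice" to make is selecting the three distinct groupings of the same identity. These three divisibility relations serve as natural building blocks for the sharper structural results that follow (e.g.\ the gcd conditions in Theorem~\ref{theorem:structure}), which is presumably why they are isolated as a stand-alone lemma at the start of the appendix.
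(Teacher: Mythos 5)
Your proposal is correct and follows essentially the same route as the paper: both rearrange the single identity $(aB+b)c = a(bB^k+c)$ into the three factorisations $bc = a(bB^k - (B-1)c)$, $ac(B-1) = b(aB^k - c)$, and $abB^k = c(a(B-1)+b)$, which are exactly your groupings. Nothing is missing.
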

\begin{proof}
    Using the fact that $N$ has the $P_{\ell; k}$ property, we have \[
        (aB + b)c = a(bB^k + c).
    \] This can be rewritten by collecting each of $a, b, c$ successively on one side,
    giving \[
        a(bB^k - (B - 1)c) = bc, \qquad
        b(aB^k - c) = ac(B - 1), \qquad
        c(a(B - 1) + b) = abB^k,
    \] from which the desired rules follow.
\end{proof}

\begin{lemma} \label{lemma:trivial_1}
    Let $N_{\ell; k}$ have property $P_{\ell; k}$.
    If any one of $a, b, c = 0$, then at least one of the others is also $0$, i.e.\ $N$
    is a trivial solution for the $P_{\ell; k}$ problem.
\end{lemma}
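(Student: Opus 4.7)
The plan is a direct case analysis on which of $a, b, c$ is assumed to vanish, using the defining Diophantine identity
\[
    (aB + b)c = a(bB^k + c)
\]
from the definition of $P_{\ell; k}$ (equivalently, any one of the three rearrangements recorded in the proof of Lemma~\ref{lemma:divisibility}).

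First I would set $a = 0$. The identity collapses to $bc = 0$, so immediately $b = 0$ or $c = 0$. Next I would set $b = 0$, which turns the identity into $aBc = ac$, i.e.\ $ac(B - 1) = 0$; since $B \geq 2$ forces $B - 1 \neq 0$, we conclude $a = 0$ or $c = 0$. Finally I would set $c = 0$, giving $abB^k = 0$; since $B \geq 2$, the factor $B^k$ is nonzero, so $a = 0$ or $b = 0$.

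In every case at least one further block vanishes, so $N_{\ell; k}$ is trivial in the sense of the paper's convention. The only thing to be careful about is invoking the standing hypothesis $B \geq 2$ in the $b = 0$ and $c = 0$ branches so that one may legitimately cancel the factors $B - 1$ and $B^k$; there is no real obstacle. Alternatively, one could phrase the whole argument as three one-line applications of Lemma~\ref{lemma:divisibility}: $a \mid bc$ with $a = 0$ gives $bc = 0$, $b \mid ac(B-1)$ with $b = 0$ gives $ac(B-1) = 0$, and $c \mid abB^k$ with $c = 0$ gives $abB^k = 0$, leading to the same conclusions.
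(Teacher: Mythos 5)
Your proposal is correct and follows essentially the same route as the paper: the paper's one-line proof simply cites the three rearrangements in Lemma~\ref{lemma:divisibility}, and your case analysis is just those same rearrangements written out explicitly (your own alternative phrasing at the end is literally the paper's proof). No gaps; the care about $B - 1 \neq 0$ and $B^k \neq 0$ is appropriate and handled correctly.
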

\begin{proof}
    This follows from the divisibility conditions in Lemma~\ref{lemma:divisibility}.
\end{proof}

\begin{lemma} \label{lemma:trivial_2}
    Let $N_k$ have property $P_k$ and at least one of the following hold.
    \begin{enumerate}
        \item $a = c$.
        \item $a_i = b$ for all $1 \leq i \leq k$.
        \item $c_i = b$ for all $1 \leq i \leq k$.
    \end{enumerate}
    Then, all the digits $a_i = c_i = b$, i.e.\ $N$ is a trivial solution for 
    the $P_k$ problem.
\end{lemma}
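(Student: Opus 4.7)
The plan is to work from the form of the Diophantine equation obtained in the proof of Lemma~\ref{lemma:divisibility} by collecting $c$ on one side:
\[
    c\bigl(a(B-1) + b\bigr) = abB^k.
\]
The repunit $R_k = (B^k - 1)/(B - 1) = [11\dots 1]_B$ (with $k$ ones) will play the central role, because the statement ``all $a_i = b$'' is precisely $a = bR_k$, and similarly for $c$. The identity $R_k(B-1) + 1 = B^k$ will be used repeatedly.

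For case (1), I would substitute $c = a$ into the displayed equation and divide by $a$ (which is nonzero since $a_1 \neq 0$), obtaining $a(B-1) + b = bB^k$, hence $a = bR_k$. Thus $a = c = [bb\dots b]_B$, so every $a_i = c_i = b$ as required.

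For cases (2) and (3), the hypothesis is exactly that $a = bR_k$ or $c = bR_k$. Plugging either expression into $c(a(B-1) + b) = abB^k$ and using $R_k(B-1) + 1 = B^k$ collapses the equation to the symmetric statement $a = c$, at which point case (1) applies and we conclude all digits equal $b$. Concretely, in case (2) the substitution $a = bR_k$ turns the left side into $c \cdot b \cdot B^k$ and the right side into $b \cdot bR_k \cdot B^k$, yielding $c = bR_k$; in case (3) the substitution $c = bR_k$ gives $aR_k(B-1) + bR_k = aB^k$, which simplifies to $a = bR_k$.

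I do not anticipate a real obstacle here; the argument is a short algebraic reduction in three parallel cases, each forcing $a = c = bR_k$. The only mild care needed is to divide by $a$ in case (1), which is justified because non-triviality (or simply that $a_1 \neq 0$) ensures $a \neq 0$.
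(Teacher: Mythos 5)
Your argument is correct and is essentially the paper's own proof: the same identity $c(a(B-1)+b) = abB^k$ (equivalently its rearrangements from Lemma~\ref{lemma:divisibility}) combined with the repunit $R_k = (B^k-1)/(B-1)$ and the relation $R_k(B-1)+1 = B^k$, handled in the same three cases. The only cosmetic difference is that you route cases (2) and (3) back through case (1), while the paper reads off $c = bR_k$ and $a = bR_k$ directly; both versions implicitly use $a, b \neq 0$ in the same places.
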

\begin{proof}
    Let $N$ have property $P_k$, whence $(aB + b)c = a(bB^k + c)$.
    Denote \[
        I = [\underbrace{1 1 \dots 1}_k] = \frac{B^k - 1}{B - 1}.
    \]
    \begin{enumerate}
        \item Putting $a = c$, \[
            a = \frac{B^k - 1}{B - 1}\cdot b, \qquad
            \sum_{i = 1}^k a_iB^{k - i} = \sum_{i = 1}^k bB^{k - i}.
        \] By the uniqueness of representation in the base $B$, each $a_i = b$.

        \item Putting $a = bI$, \[
            c = \frac{abB^k}{a(B - 1) + b} = \frac{b^2I B^k}{b(I(B - 1) +
            1)} = bI.
        \]

        \item Putting $c = bI$, \[
            a = \frac{bc}{bB^k - (B - 1)c} = \frac{b^2I}{bB^k - b(B - 1)I} = 
            \frac{b^2I}{b(B^k - (B^k - 1))} = bI. 
        \]
    \end{enumerate}
\end{proof}

\section{Uneven blocks.}

Although we primarily deal with solutions of $P^*_k$ in this paper, it
is necessary to make a short detour and examine a few aspects of the more
general $P^*_{\ell; k}$ problem in order to prove Theorem~\ref{theorem:structure}.

\begin{lemma} \label{lemma:reduce_right}
    Let $B$ be an arbitrary integer base and let $N_{\ell; k}$ have
    property $P_{\ell; k}$. If $c_k = 0$, then the number \[
        N^- = [a_1a_2\dots a_\ell \,b\, c_1c_2\dots c_{k - 1}]
    \] has property $P_{\ell; k - 1}$
\end{lemma}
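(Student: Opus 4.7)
The plan is to exploit the Diophantine characterization of $P_{\ell;k}$ directly. Recall that $N_{\ell;k}$ has property $P_{\ell;k}$ precisely when
\[
    (aB + b)c = a(bB^k + c),
\]
where $a = [a_1 \dots a_\ell]$ and $c = [c_1 \dots c_k]$. My proposal is to translate the hypothesis $c_k = 0$ into a clean algebraic identity for $c$, substitute it into the above equation, and read off the $P_{\ell;k-1}$ equation for $N^-$.

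First I would observe that since $c_k = 0$, the definition of concatenation gives
\[
    c \;=\; \sum_{i=1}^{k} c_i B^{k-i} \;=\; B \sum_{i=1}^{k-1} c_i B^{(k-1)-i} \;=\; B\, c^{-},
\]
where $c^{-} = [c_1 c_2 \dots c_{k-1}]$ is precisely the $c$-block of $N^{-}$. Next I would substitute $c = B c^{-}$ into the $P_{\ell;k}$ equation to obtain
\[
    (aB + b)(B c^{-}) \;=\; a\bigl(bB^k + B c^{-}\bigr).
\]
Factoring $B$ out of both sides and cancelling it (legal since $B \geq 2$) yields
\[
    (aB + b) c^{-} \;=\; a\bigl(bB^{k-1} + c^{-}\bigr),
\]
which is exactly the defining equation for $N^{-}$ to have property $P_{\ell;k-1}$.

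Since the argument is entirely reversible, one could even upgrade this to an ``if and only if'' statement, though that is not required for the lemma. There is no substantive obstacle: the only point one needs to be mildly careful about is that the concatenation notation $[c_1 \dots c_{k-1}]$ permits a leading zero (i.e.\ $c_1 = 0$ is allowed) since it is just a block of digits interpreted as a base-$B$ expansion, so the reduction is always well-defined regardless of whether $c^{-}$ has a zero leading digit.
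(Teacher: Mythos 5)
Your proof is correct and is essentially identical to the paper's: both substitute $c = Bc'$ (with $c' = [c_1\dots c_{k-1}]$) into the Diophantine equation $(aB+b)c = a(bB^k+c)$ and cancel a factor of $B$ to obtain the $P_{\ell;k-1}$ equation for $N^-$. Your added remarks about reversibility and leading zeros are harmless extras.
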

\begin{proof}
    Denote \[
        c' = [c_1c_2\dots c_{k - 1}], \qquad
        c = c'B + c_k = c'B.
    \] Since $N$ has property $P_{\ell; k}$, we have $(aB + b)c = a(bB^k + c)$, hence \[
        (aB + b)c'B = a(bB^k + c'B) \qquad
        (aB + b)c' = a(bB^{k - 1} + c'),
    \] which is precisely the statement that $N^-$ has property $P_{\ell; k - 1}$.
\end{proof}

\begin{lemma} \label{lemma:block_size}
    Let $B$ be an arbitrary integer base and let $N_{\ell; k}$
    have property $P^*_{\ell; k}$. Then, $\ell \leq k$. 
    In other words, there are no solutions of $P^*_{\ell; k}$ when $\ell > k$.
\end{lemma}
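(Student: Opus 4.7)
The plan is to solve the Diophantine equation $(aB + b)c = a(bB^k + c)$ for $c$ in terms of $a$, and then sandwich $c$ strictly between two consecutive integers whenever $\ell > k$. Rearranging yields
\[
    c \;=\; \frac{abB^k}{a(B - 1) + b},
\]
a strictly increasing function of $a$ whose supremum as $a \to \infty$ is $L := bB^k / (B - 1)$. Non-triviality together with Lemma~\ref{lemma:trivial_1} rules out $b = 0$, so the formula is well-defined. Writing $B^k = (B - 1)q + 1$ with $q = (B^k - 1)/(B - 1)$, we obtain the identity $L = bq + b/(B - 1)$, which places $L$ in the interval $(bq,\, bq + 1]$, with equality on the right exactly when $b = B - 1$.

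Next, I would invoke the hypothesis $\ell \ge k + 1$, which forces $a \ge B^{\ell - 1} \ge B^k$. By monotonicity,
\[
    c \;\geq\; \frac{bB^{2k}}{(B - 1)B^k + b}.
\]
A short cross-multiplication reduces the claim that this lower bound strictly exceeds $bq$ to the inequality $(B - 1 - b)B^k + b > 0$, which is patent for every admissible digit $1 \le b \le B - 1$. Combined with $c < L \le bq + 1$, this squeezes $c$ into the open interval $(bq,\, bq + 1)$, which contains no integer. The required integrality of $c$ is thereby violated, contradicting the existence of a non-trivial $N_{\ell; k}$ with $\ell > k$.

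The argument is mostly bookkeeping once the key observation is in place: the supremum of $c$ as a function of $a$ lies just below the integer $bq + 1$, and tightening $a \ge B^k$ already suffices to push $c$ above $bq$. The only mildly delicate point is the boundary case $b = B - 1$, where $L$ itself coincides with the integer $B^k$; there one uses strictness of the supremum to exclude $c = B^k$ and close the argument in the interval $(B^k - 1,\, B^k)$.
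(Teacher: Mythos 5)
Your proof is correct, but it takes a genuinely different route from the paper's. The paper rearranges $(aB+b)c = a(bB^k+c)$ into $a(bB^k + c - Bc) = bc$ and studies the positive integer $d = bc/a = bB^k - (B-1)c$: if $1 \le d < B$ then $d$ is a single digit, and equating base-$B$ coefficients in $bB^k + c = cB + d$ forces $b = c_1 = \dots = c_k = d$ and hence $a = c$, a trivial solution; therefore $d \ge B$ for non-trivial solutions, and $a = bc/d < B\cdot B^k/B = B^k$ gives $\ell \le k$ directly. You instead solve for $c = abB^k/(a(B-1)+b)$, observe it is strictly increasing in $a$ with supremum $L = bB^k/(B-1) = bq + b/(B-1) \le bq+1$ where $q = (B^k-1)/(B-1)$, and show that $a \ge B^k$ (forced by $\ell > k$) pushes $c$ strictly above $bq$ — your cross-multiplication to $(B-1-b)B^k + b > 0$ checks out — trapping the integer $c$ in the empty interval $(bq, bq+1)$; your handling of the boundary case $b = B-1$, where $L = bq+1$ but the supremum is not attained, is also correct. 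What each approach buys: your squeeze is self-contained, avoids the digit-by-digit uniqueness argument entirely, and uses non-triviality only to secure $b > 0$; the paper's argument is more laborious here but its by-product $d \ge B$ (recorded as Corollary~\ref{corollary:trivial_ratio}) is reused as the engine of Lemmas~\ref{lemma:leading_digit} and~\ref{lemma:last_block}, so the digit-equating machinery pays for itself downstream in a way your argument would not.
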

\begin{proof}
    If $N_{\ell; k}$ has property $P^*_{\ell; k}$, then $a, b, c > 0$, \[
        (aB + b)c = a(bB^k + c), \qquad
        a(bB^k + c - Bc) = bc.
    \] Put $bB^k + c - Bc = bc / a = d$, which is a positive integer. Suppose that 
    $1 \leq d < B$, i.e.\ $d$ is a single digit in base $B$. Expanding 
    $bB^k + c = cB + d$ gives us \[
        bB^k + c_1B^{k - 1} + \dots + c_{k - 1}B + c_k = c_1B^k + c_2B^{k - 1} + \dots 
        + c_kB + d.
    \] By the uniqueness of representation of integers in the base $B$, we equate
    the coefficients $b = c_1$, $c_1 = c_2$, \dots, $c_{k - 1} = c_k$, $c_k = d$; 
    specifically, $b = d$. Thus, $a = bc / d = c$, hence the solution is trivial by
    Lemma~\ref{lemma:trivial_2}.

    This means that for $N$ to be a non-trivial solution, we must have $d \geq B$. Now, 
    $0 < b < B$ and $0 < c < B^k$, hence \[
        a = \frac{bc}{d} < \frac{B \cdot B^k}{B} = B^k.
    \] This shows that $a$ can have at most $k$ digits, hence $\ell \leq k$.
\end{proof}

\begin{corollary} \label{corollary:last_digit}
    If $N_k$ has property $P^*_k$, then $c_k \neq 0$.
\end{corollary}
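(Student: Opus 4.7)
The plan is to prove this by contradiction: assume $N_k$ has $P^*_k$ and $c_k = 0$, then derive a violation of a previously proved bound on block sizes. The two workhorses will be Lemma~\ref{lemma:reduce_right}, which lets me strip off the trailing zero, and Lemma~\ref{lemma:block_size}, which forbids the left block from being at least as long as the right.

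Concretely, suppose $c_k = 0$. Since $N_k$ has $P^*_k$, Lemma~\ref{lemma:trivial_1} gives $a, b, c > 0$, and $c > 0$ together with $c_k = 0$ forces $c \geq B$, so the truncated right block $c^- = c/B$ is itself a positive integer. Lemma~\ref{lemma:reduce_right} then tells me that
\[
    N^- = [a_1 a_2 \dots a_k \, b \, c_1 c_2 \dots c_{k-1}]
\]
has property $P_{k; k-1}$, with blocks $(a^-, b, c^-) = (a, b, c/B)$, all positive.

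The key step is to upgrade this to $P^*_{k; k-1}$, i.e.\ to rule out the repunit-type trivial solution for $N^-$. If $N^-$ were $[dd\dots d]$ with $2k$ equal digits, then in particular $a_i = b$ for every $1 \leq i \leq k$; by Lemma~\ref{lemma:trivial_2} this would force all digits of $N_k$ to be equal to $b$, contradicting the non-triviality of $N_k$ (the digit $c_k = 0 \neq b$ already blocks this, but the lemma gives the cleaner statement). Hence $N^-$ is genuinely a $P^*_{k; k-1}$ solution.

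At this point Lemma~\ref{lemma:block_size} applied to $N^-$ yields $k \leq k - 1$, the desired contradiction. I do not expect any real obstacle here; the only subtlety is verifying that $N^-$ is non-trivial, and Lemma~\ref{lemma:trivial_2} handles exactly the case that could go wrong.
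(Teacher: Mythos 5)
Your proof is correct and follows essentially the same route as the paper: strip the trailing zero with Lemma~\ref{lemma:reduce_right} and contradict Lemma~\ref{lemma:block_size} with $\ell = k > k-1$. The only (minor) difference is how the triviality of $N^-$ is dispatched — you rule out the all-equal-digits case via Lemma~\ref{lemma:trivial_2} applied to $N_k$, whereas the paper notes that the non-degenerate trivial case would force $a < B^{k-1}$, contradicting $a$ having $k$ digits; both are valid.
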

\begin{proof}
    If $c_k = 0$, we see that \[
        N^- = [a_1a_2\dots a_k \,b\, c_1c_2\dots c_{k - 1}]
    \] has property $P_{k; k - 1}$, and hence must be a trivial solution by
    Lemma~\ref{lemma:block_size}. Furthermore, it must be trivial in the sense
    that one of $a, b, c = 0$; if not, then $a < B^{k - 1}$ from the
    lemma contradicts the fact that $a$ is a $k$-digit number.
    Thus, the original number $N_k$ is also a trivial solution.
\end{proof}

\begin{corollary} \label{corollary:trivial_ratio}
    If $N_k$ has property $P^*_{\ell; k}$, then the integer $d = bc / a \geq B$.
\end{corollary}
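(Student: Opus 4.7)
The plan is to essentially repackage the single-digit argument already carried out inside the proof of Lemma~\ref{lemma:block_size}, since Corollary~\ref{corollary:trivial_ratio} is exactly the contrapositive of what was established there on the way to bounding $\ell$.

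First I would recall that from the defining equation $(aB+b)c = a(bB^k + c)$, we can rearrange to obtain $a(bB^k + c - Bc) = bc$, so $d := bB^k + c - Bc = bc/a$ is a positive integer (positivity follows from $a,b,c>0$, which is guaranteed by non-triviality via Lemma~\ref{lemma:trivial_1}). The goal is to rule out $1 \leq d < B$, i.e.\ $d$ being a single digit in base $B$.

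Next I would expand $bB^k + c = cB + d$ digit by digit: writing $c = [c_1 c_2 \dots c_k]$, the left side is $[b\, c_1 c_2 \dots c_k]$ while the right side, assuming $0 \leq d < B$, is $[c_1 c_2 \dots c_k\, d]$. By the uniqueness of base-$B$ representation, matching digits forces
\[
    b = c_1,\; c_1 = c_2,\; \ldots,\; c_{k-1} = c_k,\; c_k = d,
\]
so in particular $b = d$, and hence $a = bc/d = c$. By Lemma~\ref{lemma:trivial_2}(1), $a = c$ forces all digits to equal $b$, making $N$ a trivial solution of $P_k$ — contradicting $P^*_{\ell;k}$.

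There is no real obstacle here beyond bookkeeping; the only subtle point is that the corollary is stated for the possibly uneven $P^*_{\ell;k}$ setting, so I would briefly note that the digit-matching argument uses only the structure of $c$ (which has $k$ digits by definition) and not any constraint on $\ell$. Thus $d < B$ is impossible under $P^*_{\ell;k}$, and we conclude $d \geq B$ as required.
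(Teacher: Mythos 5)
Your proposal is correct and matches the paper's own reasoning: the paper gives no separate proof for this corollary, treating it as read off from the first half of the proof of Lemma~\ref{lemma:block_size}, which is exactly the digit-matching argument you reproduce (ruling out $1 \le d < B$ by forcing $b = c_1 = \dots = c_k = d$, hence $a = c$ and triviality via Lemma~\ref{lemma:trivial_2}). Your remark that $a = c$ forces $a$ to be a $k$-digit block, so Lemma~\ref{lemma:trivial_2} applies even in the uneven setting, is a sensible tidying of a point the paper glosses over.
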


The technique used in Lemma~\ref{lemma:block_size} can be employed to obtain
an even sharper bound on the leading block $a$ of a solution $N_k$.

\begin{lemma} \label{lemma:leading_digit}
    Let $B$ be an arbitrary integer base and let $N_k$ have property
    $P^*_k$.
    Then, $a < B^k / 2$. As a result, the leading digit $a_1 < B / 2$.
\end{lemma}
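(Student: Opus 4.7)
The plan is to argue by contradiction: suppose $a \geq B^k/2$. Mimicking the setup of Lemma~\ref{lemma:block_size}, introduce the positive integer $d := bc/a = bB^k - (B-1)c$, which by Corollary~\ref{corollary:trivial_ratio} satisfies $d \geq B$. Since $c < B^k$, the assumption gives $d = bc/a < 2b$. Combined with $d \geq B$, this yields $B \leq d < 2b \leq 2(B-1)$, so $b > B/2$ (already contradicting $B = 2$, where $b \leq 1$) and $d$ has exactly two base-$B$ digits with leading digit $1$; write $d = B + d_2$.

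The key step is a digit-comparison on the identity $bB^k + c = cB + d$. The LHS expands in base $B$ as $[b\,c_1\,\dots\,c_k]$, while the RHS equals $c_1 B^k + \dots + c_{k-1} B^2 + (c_k + 1)B + d_2$. I split on whether a carry arises at position $B^1$. In \textbf{Case A} ($c_k + 1 < B$), uniqueness of base-$B$ representation forces $b = c_1 = \dots = c_{k-1} = c_k + 1$ and $c_k = d_2$, hence $d = B + b - 1$ and $c = bM + (b-1)$ with $M = (B^k - B)/(B-1)$. Substituting into $a = bc/d$ and clearing denominators reduces $2a < B^k$ to $(2b + B - 1)(b - (B-1))B^k < 2b(B + b - 1)$, whose LHS is $\leq 0$ for $b \leq B - 1$ while the RHS is strictly positive---a contradiction.

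In \textbf{Case B} ($c_k = B - 1$), the $+1$ at position $B^1$ propagates as a carry. If the cascade terminates after one level ($c_{k-1} < B - 1$), matching forces $b = c_1 = \dots = c_{k-2} = 1$ and $c_{k-1} = 0$, contradicting $b > B/2 \geq 1$. If the cascade persists further ($c_{k-1} = B - 1$, and possibly beyond), equating coefficients at the first position past the cascade demands some digit $c_i$ to simultaneously equal $0$ and $B - 1$, which is impossible. Hence $a < B^k/2$ in every case, and since $a \geq B^{k-1}$ as a $k$-digit number, $a_1 = \lfloor a/B^{k-1}\rfloor < B/2$.

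The main obstacle is tracking the cascading carries cleanly in Case B and verifying the algebraic inequality in Case A; the identity $2b^2 - (B-1)(B+b-1) = (2b + B - 1)(b - (B-1))$ is what makes the latter transparent. Everything else is a fairly direct application of the digit-matching technique already established in Lemma~\ref{lemma:block_size}.
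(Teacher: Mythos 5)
Your proof is correct and follows essentially the same route as the paper's: both arguments reduce to the range $d = bc/a \in [B,\, 2B-2)$ (you via the contrapositive assumption $a \geq B^k/2$, the paper via a direct one-line bound when $d \geq 2B-2$), perform the same digit-matching on $bB^k + c = cB + d$ to force $b = c_1 = \dots = c_{k-1} = c_k + 1$, and finish with an equivalent algebraic estimate of $a = bc/d$. Your Case B could be dispatched immediately by comparing units digits (the matching already forces $c_k = d_2 \leq B-3$, so $c_k = B-1$ is impossible), but the carry analysis you give reaches the same contradiction, so this is only a matter of economy.
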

\begin{proof}
    Continuing along the same lines as the proof of Lemma~\ref{lemma:block_size}, 
    suppose that $N_k$ has property $P^*_k$. Then $a, b, c > 0$, \[
        (aB + b)c = a(bB^k + c), \qquad
        a(bB^k + c - Bc) = bc,
    \] and $bB^k + c - Bc = bc / a = d$ is a positive integer. We saw 
    that when $1 \leq d < B$, the solution $N$ is trivial. Furthermore, when 
    $d \geq 2B - 2$, observe that \[
        a = \frac{bc}{d} \leq \frac{(B - 1)\cdot (B^k - 1)}{2B - 2} = \frac{1}{2} (B^k - 1).
    \]

    We now examine the remaining case $B \leq d < 2B - 2$. Setting $d' = d - B$, we have
    $0 \leq d' < B - 2$, i.e\ $d'$ is a single digit in base $B$. Expanding $bB^k + c = cB + d$
    gives us \begin{align*}
        bB^k + c_1B^{k - 1} &+ \dots + c_{k - 1}B + c_k \\
        &= c_1B^k + c_2B^{k - 1} + \dots + c_kB + (B + d').
    \end{align*}
    This implies $B \mid c_k - d'$; but $0 \leq \vert c_k - d'\vert < B$ forcing 
    $c_k = d'$. Since $k \geq 1$, we can subtract $c_k = d'$ and divide $B$, yielding
    \begin{align*}
        bB^{k - 1} + c_1B^{k - 2} &+ \dots + c_{k - 1} \\
        &= c_1B^{k - 1} + c_2B^{k - 2} + \dots + c_{k - 1}B + d' + 1. \tag{$\star$}
    \end{align*}
    Since $d' < B - 2$, the number $d' + 1 < B - 1$ is a single digit, so we 
    can equate coefficients and see that $b = c_1$, $c_1 = c_2$, \dots, 
    $c_{k - 2} = c_{k - 1}$, $c_{k - 1} = d' + 1$, hence $b = c_1 = \dots = c_{k - 1} = 
    d' + 1$. In other words, all the digits of $c + 1$ are exactly $d' + 1$, 
    so if we set \[
        I = [1_1 1_2\dots 1_k] = \frac{B^k - 1}{B - 1},
    \] then $c + 1 = (d' + 1)I$. Then, \[
        a = \frac{bc}{d} = \frac{(d' + 1)[(d' + 1)I - 1]}{B + d'} 
        < \frac{d'\cdot [(B - 1)I - 1]}{2d'} = 
        \frac{1}{2}[(B - 1)I - 1]
    \] hence putting $(B - 1)I = B^k - 1$ gives \[
        a < \frac{1}{2}(B^k - 2). 
    \]
\end{proof}

\section{The trailing block.}

This section deals with the trailing block $c$ of a solution $N_k$, which
can be almost completely described in terms of the central block $b$.

\begin{lemma} \label{lemma:last_block}
    Let $B$ be an arbitrary integer base and let $N_k$ have property 
    $P^*_k$.
    Then, the digits in the last block satisfy $c_k < c_{i < k} = b$.
    In other words, all solutions of $P^*_k$ look like \[
        N = [a_1a_2\dots a_k \,b\, b b\dots b c_k]
    \]
\end{lemma}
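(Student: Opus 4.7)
The plan is to extend the digit-chasing technique used in Lemma~\ref{lemma:leading_digit}. Since $a$ has exactly $k$ base-$B$ digits, $a \geq B^{k-1}$, so the integer $d = bc/a$ satisfies
\[
    d \;\leq\; \frac{b(B^k - 1)}{B^{k-1}} \;<\; bB \;\leq\; B(B-1) \;<\; B^2.
\]
Hence $d$ has at most two digits in base $B$; I would write $d = d_0 + d_1 B$, noting $d_1 \geq 1$ by Corollary~\ref{corollary:trivial_ratio}. The next step is to equate digits in $bB^k + c = cB + d$ position by position, tracking the carry $\gamma_i \in \{0, 1\}$ entering position $i$ (with $\gamma_1 = 0$). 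Position $0$ yields $c_k = d_0$, position $1$ yields $c_k + d_1 = c_{k-1} + \gamma_2 B$, each position $2 \leq i \leq k-1$ yields $c_{k-i+1} + \gamma_i = c_{k-i} + \gamma_{i+1} B$ (since $d_i = 0$ there), and position $k$ yields $c_1 + \gamma_k = b$.

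The analysis then splits on $\gamma_2$. If $\gamma_2 = 0$, a short induction shows $\gamma_i = 0$ and $c_{k-i} = c_{k-i+1}$ for every $i \geq 2$, so $c_1 = \dots = c_{k-1}$; combined with $c_1 = b$ from position $k$ this gives $c_1 = \dots = c_{k-1} = b$ and $c_k = b - d_1 \in \{1, \dots, b-1\}$, exactly the claimed structure. If instead $\gamma_2 = 1$, then $c_{k-1} = c_k + d_1 - B \leq B - 2$, so no further carry can trigger, and the recursion forces $c_1 = \dots = c_{k-2} = b$, $c_{k-1} = b - 1$, $c_k = b + B - 1 - d_1$. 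Setting $u = d_1 + 1 \in [b+1, B]$, these relations collapse to $c = bM_k - u$ and $d = b + (B-1)u$, where $M_k = (B^k - 1)/(B-1)$; the presence of $c_{k-1} = b - 1$ here would violate the lemma, so this branch must be excluded.

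The main obstacle will be ruling out the case $\gamma_2 = 1$. My plan is to use $d \geq (B-1)u \geq (B-1)(b+1)$ and $c \leq bM_k$ to estimate
\[
    a \;=\; \frac{bc}{d} \;\leq\; \frac{b^2 M_k}{(B-1)(b+1)} \;=\; \frac{b^2(B^k - 1)}{(B-1)^2(b+1)}.
\]
Since $b \mapsto b^2/(b+1)$ is increasing, it attains its maximum $(B-1)^2/B$ on $[1, B-1]$ at $b = B-1$, giving the uniform bound $a \leq (B^k - 1)/B < B^{k-1}$. This contradicts $a \geq B^{k-1}$. Hence only $\gamma_2 = 0$ is admissible, and the lemma follows.
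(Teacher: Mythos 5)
Your proposal is correct, and it follows the same overall strategy as the paper's proof: bound $d = bc/a$ below by $B$ (Corollary~\ref{corollary:trivial_ratio}) and above by $bB < B^2$ using $a \geq B^{k-1}$, write $d$ as a two-digit number, and compare $bB^k + c$ with $cB + d$ digit by digit, splitting on whether a carry propagates out of the $B^1$ position. Your carry $\gamma_2$ is exactly the paper's $q'$ in the expansion $d' + q = q'B + r$. Where the two arguments genuinely diverge is in the carry case. The paper tries to show that $q' = 1$ \emph{also} yields the pattern $b = c_{i<k} > c_k$, but its coefficient comparison there asserts $c_{k-2} = c_{k-1} = r$ when the equation actually forces $c_{k-2} = c_{k-1} + 1$, and its final inequality $c_k = (B - q) + r < r$ would require $q > B$; so that branch of the published proof does not go through as written. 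You instead correctly derive the pattern $c_1 = \dots = c_{k-2} = b$, $c_{k-1} = b - 1$, $c_k = b + B - 1 - d_1$ in that branch and then \emph{exclude} it: your estimate $a < b^2 M_k / ((B-1)(b+1)) \leq (B^k - 1)/B < B^{k-1}$ is valid (I checked $u = d_1 + 1 \geq b + 1$, $c = bM_k - u$, $d = b + (B-1)u$, and the monotonicity of $b^2/(b+1)$), and it contradicts $a \geq B^{k-1}$. So your write-up is not just an alternative route but effectively a repair of the paper's argument. Two small remarks: (i) an even shorter exclusion is available, since $d = bc/a < bB$ gives $d_1 \leq b - 1$, directly contradicting the $d_1 \geq b$ you already derived from $c_k \leq B - 1$; (ii) your position-indexing implicitly assumes $k \geq 2$, so you should dispatch $k = 1$ separately (there $bB + c = cB + d$ gives $b = c + d_1 > c$ immediately), as the paper does.
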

\begin{proof}
    As before, suppose that $N_k$ has property $P^*_k$. Then $a, b, c > 0$, \[
        (aB + b)c = a(bB^k + c), \qquad
        a(bB^k + c - Bc) = bc,
    \] and $bB^k + c - Bc = bc / a = d$ is a positive integer.
    By Corollary~\ref{corollary:trivial_ratio}, we have $d \geq B$. Since \[
        d = \frac{bc}{a} <\frac{(B - 1)\cdot B^k}{B^{k - 1}} < (B - 1)B < B^2,
    \] we have $d = qB + d'$, with $0 < q, d' < B$; the fact
    that $q > 0$ follows from $d \geq B$.
    Expanding $bB^k + c = cB + d$, we have \begin{align*}
        bB^k + c_1B^{k - 1} &+ \dots + c_{k - 1}B + c_k \\
            &= c_1B^k + c_2B^{k - 1} + \dots + c_kB + (qB + d'). \tag{$\star$}
    \end{align*}
    We have $B \mid c_k - d'$, forcing $c_k = d'$.

    Consider the case $k = 1$, where our equation now reads $bB + c = cB + d$, hence 
    $bB + d' = d'B + qB + d'$, so $bB = (d' + q)B$.
    Thus, $b = d' + q > d' = c$ as desired.

    Now let $k \geq 2$. Subtracting $c_k = d'$ from both sides of ($\star$) and dividing by $B$ 
    gives \begin{align*}
        bB^{k - 1} + c_1B^{k - 2} &+ \dots + c_{k - 2}B + c_{k - 1} \\
        &= c_1B^{k - 1} + c_2B^{k - 2} + \dots + c_{k - 1}B + d' + q.
    \end{align*}
    Note that $2 \leq d' + q \leq 2B - 2 < 2B$, so expand $d' + q = q'B + r$ for some 
    $0 \leq r < B$, and $q' = 0, 1$. If $q' = 0$,  then \begin{align*}
        bB^{k - 1} + c_1B^{k - 2} &+ \dots + c_{k - 2}B + c_{k - 1} \\
        &= c_1B^{k - 1} + c_2B^{k - 2} +\dots + c_{k - 1}B + r,
    \end{align*}
    hence we can equate coefficients yielding $b = c_1$, $c_1 = c_2$, \dots, $c_{k - 2} =
    c_{k - 1}$, $c_{k - 1} = r = d' + q = c_k + q > c_k$. In other words, all 
    $b = c_{i < k} > c_k$.

    Otherwise, $q' = 1$, and \begin{align*}
        bB^{k - 1} + c_1B^{k - 2} &+ \dots + c_{k - 2}B + c_{k - 1} \\
        &= c_1B^{k - 1} + c_2B^{k - 2} + \dots + (c_{k - 1} + 1)B + r.
    \end{align*}
    This gives $B \mid c_{k - 1} - r$, hence $c_{k - 1} = r$ anyways. Now if $r = B - 1$,
    we would have $d' + q = q'B + r = 2B - 1$; this contradicts $q' + d \leq 2B - 2$.
    Thus, $r \leq B - 2$, so $c_{k - 1} + 1 = r + 1 \leq B - 1$ is a single digit in base $B$.
    Equating coefficients, $b = c_1$, $c_1 = c_2$, \dots, $c_{k - 2} = c_{k - 1} = r$.
    Furthermore, $c_k = d' = (q'B + r) - q = (B - q) + r < r = c_{k - 1}$. Thus, we again have
    $b = c_{i < k} > c_k$.
\end{proof}

\begin{corollary} \label{corollary:ac_blocks}
    Let $B$ be an arbitrary integer base and let $N_k$ have property $P^*_k$.
    Then, \[
        a = \frac{bM}{B} + \frac{bc_kB^{k - 1}}{bB - (B - 1)c_k}, \qquad 
        c = bM + c_k, \qquad
        M = \frac{B^k - B}{B - 1}.
    \] Since $bM / B = b(B^{k - 1} - 1) / (B - 1)$ is an integer for $k > 1$, so is
    $bc_kB^{k - 1} / (bB - (B - 1)c_k)$.
\end{corollary}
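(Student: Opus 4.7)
The plan is to read off the expression for $c$ directly from the block structure supplied by Lemma~\ref{lemma:last_block}, and then derive the expression for $a$ by solving the defining Diophantine equation for $a$ and simplifying using the identity $(B-1)M = B^k - B$.

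First, by Lemma~\ref{lemma:last_block} every solution of $P^*_k$ has the form $N_k = [a_1\dots a_k \, b\, b\dots b\, c_k]$, where the trailing block consists of $k-1$ copies of $b$ followed by $c_k$. Expanding in base $B$,
\[
    c = \sum_{i=1}^{k-1} b\, B^{k-i} + c_k = b\,(B + B^2 + \dots + B^{k-1}) + c_k = b\cdot\frac{B^k - B}{B-1} + c_k = bM + c_k,
\]
which is the claimed formula for $c$.

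Next, the $P^*_k$ relation $(aB + b)c = a(bB^k + c)$ rearranges to $a\bigl(bB^k - (B-1)c\bigr) = bc$, so
\[
    a = \frac{bc}{bB^k - (B-1)c}.
\]
Substituting $c = bM + c_k$ into the denominator and using $(B-1)M = B^k - B$, the bulky $bB^k$ terms cancel and the denominator collapses to $bB - (B-1)c_k$. Thus $a = b(bM + c_k)/(bB - (B-1)c_k)$. To put this into the stated two-term form, I would split the numerator, observing that after cross-multiplying one finds
\[
    \frac{bM}{B}\bigl(bB - (B-1)c_k\bigr) + bc_kB^{k-1} = b^2M + bc_k,
\]
which relies on the identity $B^{k-1} - M(B-1)/B = 1$, i.e.\ $M/B = (B^{k-1}-1)/(B-1)$. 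Dividing by $bB - (B-1)c_k$ yields the advertised decomposition $a = bM/B + bc_kB^{k-1}/(bB-(B-1)c_k)$.

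Finally, for the integrality remark: when $k > 1$, the quantity $M/B = (B^{k-1}-1)/(B-1) = 1 + B + \dots + B^{k-2}$ is clearly an integer, so $bM/B \in \mathbb{Z}$. Since $a$ is an integer by hypothesis, the second summand $bc_kB^{k-1}/(bB-(B-1)c_k)$ must be an integer as well. The only real content is the algebraic bookkeeping in the simplification of the denominator; there is no genuine obstacle, just careful application of the identity $(B-1)M = B^k - B$.
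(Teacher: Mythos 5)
Your proof is correct and follows essentially the same route as the paper: read off $c = bM + c_k$ from Lemma~\ref{lemma:last_block}, solve the Diophantine equation for $a$, collapse the denominator to $bB - (B-1)c_k$ using $(B-1)M = B^k - B$, and verify the two-term split via $M/B = (B^{k-1}-1)/(B-1)$. The integrality remark is handled identically, so there is nothing to add.
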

\begin{proof}
    Recall that \[
        (aB + b)c = a(bB^k + c), \qquad
        a(bB^k + c - Bc) = bc.
    \] Since $b = c_{i < k}$ by Lemma~\ref{lemma:last_block}, we can write \[
        c = [bb\dots b\,c_k] = bB^{k - 1} + \dots + bB + c_k = 
        b\frac{B^k - B}{B - 1} + c_k = bM + c_k.
    \] Thus, \[
        a = \frac{bc}{bB^k - (B - 1)c} = \frac{b(bM + c_k)}{bB^k - (b(B^k - B) + (B - 1)c_k)} 
        = \frac{b(bM + c_k)}{bB - (B - 1)c_k}.
    \] Now, note that \[
        (bB - (B - 1)c_k)\frac{M}{B} = bM - (B^{k - 1} - 1)c_k = bM + c_k - B^{k - 1}c_k.
    \] Thus, \[
        \frac{bM + c_k}{bB - (B - 1)c_k} = \frac{M}{B} + \frac{c_kB^{k - 1}}{bB - (B - 1)c_k},
    \] whence \[
        a = \frac{b(bM + c_k)}{bB - (B - 1)c_k} = \frac{bM}{B} + 
        \frac{bc_kB^{k - 1}}{bB - (B - 1)c_k}. 
    \]
\end{proof}

The following observation regarding the final digit $c_k$ ties up the
proof of Theorem~\ref{theorem:structure}. This requires diving into the
prime factorisation of the base $B$.

\begin{lemma}\label{lemma:gcds}
    Let $B$ be an arbitrary integer base and let $N_k$ have property 
    $P^*_k$. Then, $\gcd(c_k, B) > 1$, i.e.\ the last digit $c_k$ must share some
    factor $p > 1$ with $B$.
    Furthermore, if $a_k \neq b$, then $gcd(a_k - b, B) > 1$.
\end{lemma}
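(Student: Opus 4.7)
My plan is to extract a single divisibility relation $B \mid (a_k - b)c_k$ from the defining equation, from which the second claim falls out immediately and the first follows by induction on $k$ using the reverse of Proposition \ref{proposition:extension}.

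Starting from $(aB + b)c = a(bB^k + c)$, I would rearrange to $ad = bc$ where $d = bB^k - (B-1)c$. Substituting $c = bM + c_k$ from Lemma \ref{lemma:last_block}, with $M = (B^k - B)/(B - 1)$, a short calculation collapses this to $d = bB - (B-1)c_k$. Both $bM$ (automatically zero when $k = 1$, and a multiple of $B$ when $k \geq 2$) and $d - c_k = B(b - c_k)$ are divisible by $B$, so $c \equiv d \equiv c_k \pmod B$, and the equation $ad = bc$ becomes $(a - b)c_k \equiv 0 \pmod B$. Since $a \equiv a_k \pmod B$, this yields the key relation
\[
    B \mid (a_k - b) c_k.
\]
The second claim is then immediate: if $a_k \neq b$ but $\gcd(a_k - b, B) = 1$, the relation forces $B \mid c_k$, which contradicts $0 < c_k < B$ guaranteed by Corollary \ref{corollary:last_digit}.

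For the first claim, I would induct on $k$, supposing $\gcd(c_k, B) = 1$ for contradiction. The key relation, combined with $|a_k - b| < B$, forces $a_k = b$; together with $c_1 = b$ from Lemma \ref{lemma:last_block}, the number takes the shape $N_k = [a_1 \cdots a_{k-1}\, b\, b\, b\, c_2 \cdots c_k]$, and reversing Proposition \ref{proposition:extension} produces $N' = [a_1 \cdots a_{k-1}\, b\, c_2 \cdots c_k]$, a solution of $P_{k-1}$. I would then verify that $N'$ is actually in $P^*_{k-1}$: no block of $N'$ vanishes (since $a_1, b, c_k > 0$), and if all of its digits were equal then adjoining $a_k = c_1 = b$ would make every digit of $N_k$ equal, contradicting non-triviality. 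Because $N'$ retains $c_k$ as its last digit, the inductive hypothesis applies and delivers the contradiction. The base case $k = 1$ is handled directly: $a_1 = b$ reduces $N_1$ to $[b\,b\,c_1]$, and plugging into the defining equation forces $c_1 = b$, which contradicts $c_1 < b$ from Lemma \ref{lemma:last_block}.

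The one subtle point is the non-triviality bookkeeping under the descent: the reverse pumping step only yields a solution of $P_{k-1}$, so at each level I must re-verify positivity of the three blocks of $N'$ and the non-uniformity of its digits before invoking the inductive hypothesis. Once that is managed, the induction runs without incident and the rest of the argument is a straight modular computation.
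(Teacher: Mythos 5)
Your proposal is correct and follows essentially the same route as the paper: both hinge on the key relation $B \mid (a_k - b)c_k$ extracted from the defining equation, dispose of the second claim by noting that $0 < c_k < B$ rules out $B \mid c_k$, and prove the first claim by induction on $k$, using $B \mid a_k - b \Rightarrow a_k = b$ together with $c_1 = b$ to reverse Proposition~\ref{proposition:extension} and descend to a $P^*_{k-1}$ solution with the same last digit. Your contrapositive handling of the second claim is a cleaner packaging of the paper's prime-valuation bookkeeping, and your attention to re-verifying non-triviality of the reduced number matches the paper's own check.
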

\begin{proof}
    Suppose that $N_k$ has property $P^*_k$. Then $a, b, c > 0$, $c_k > 0$, and \[
        (aB + b)c = a(bB^k + c), \qquad
        (ac - abB^{k - 1})B = (a - b)c.
    \] This gives $B \mid (a - b)c$; writing \begin{align*}
        a' &= [a_1a_2\dots a_{k - 1}], & a &= a'B + a_k, \\
        c' &= [c_1c_2\dots c_{k - 1}], & c &= c'B + c_k,
    \end{align*} 
    we have $B \mid (a - b)(c'B + c_k)$, hence $B\mid (a - b)c_k = (a'B + a_k - b)c_k$,
    hence $B \mid (a_k - b)c_k$. Let \[
        B = p_1^{\alpha_1}p_2^{\alpha_2}\dots p_m^{\alpha_m}
    \] be the prime factorization of $B$, with each $\alpha_i \geq 1$.
    Then, for each prime factor $p_i$ of $B$, $p_i^{\alpha_i}\mid (a_k - b)c_k$.
    Let $\beta_i$ be the greatest integer such that $p_i^{\beta_i} \mid c_k$.
    Then, we must have $p_i^{\alpha_i - \beta_i} \mid a_k - b$; if not, the greatest
    power of $p_i$ dividing $(a_k - b)c_k$ would have been strictly less than $\beta_i + 
    (\alpha_i - \beta_i) = \alpha_i$, a contradiction.

    It is clear that we cannot have all $\beta_i \geq \alpha_i$; if so, we would have all
    $p_i^{\alpha_i} \mid c_k$, hence the product $p_1^{\alpha_1}\dots p_m^{\alpha_m} = B
    \mid c_k$, but $0 < c_k < B$, a contradiction. Thus, there must be some $\beta_j <
    \alpha_j$ corresponding to which $p^{\alpha_j - \beta_j} \mid a_k - b$, hence 
    $\gcd(a_k - b, B) \geq p_j > 1$. This proves the second part of our lemma.

    To prove the first part, we use induction on the block size $k$.
    Consider $k = 1$, where $a_k = a$, $c_k = c$, and suppose that all $\beta_i = 0$. 
    This forces all $p_i^{\alpha_i - \beta_i} = p_i^{\alpha_i} \mid a - b$, hence 
    their product $p_1^{\alpha_1}\dots p_m^{\alpha_m} = B \mid a - b$. 
    But $0 \leq \vert a - b\vert < B$, forcing $a - b = 0$.
    By Lemma~\ref{lemma:trivial_2}, this gives a trivial solution, a contradiction.
    Thus, there must be some $\beta_{j'} > 0$, hence $\gcd(c, B) \geq p_j^{\beta_j'} > 1$.

    Next, suppose that the statement holds for some $k \geq 1$, and let \[
        N = [a_1a_2\dots a_k a_{k + 1} \,b\, c_1c_2\dots c_{k + 1}] =
        [a\,b\,c]
    \] have property $P^*_{k + 1}$. Then, $a, b, c, > 0$, $c_{k + 1} > 0$,
    and $B \mid (a_{k + 1} - b)c_{k + 1}$. Again, if all $\beta_i = 0$,
    then we must have all $p_i^{\alpha_i - \beta_i} = p_i^{\alpha_i} \mid a_{k + 1} - b$,
    hence their product $B \mid a_{k + 1} - b$.
    Since $0 \leq \vert a_{k + 1} - b \vert < B$, we have $a_{k + 1} - b = 0$. But we also have
    $b = c_1 > c_{k + 1}$ by Lemma~\ref{lemma:last_block}.
    By Proposition~\ref{proposition:extension}, the number with the digits $a_{k + 1}, 
    c_1 = b$ removed, i.e.\ \[
        N' = [a_1a_2\dots a_k\, b\, c_2\dots c_{k + 1}],
    \] must have the $P^*_k$ property (each block remains non-zero, and $b > 
    c_{k + 1}$ so not all digits are equal). Applying our induction hypothesis, we have 
    $\gcd(c_{k + 1}, B) > 1$.

    Thus, by induction, our statement holds for all $k \geq 1$.
\end{proof}

\section{Estimate of saturation points.} \label{section:estimate_saturation}



We are now ready to prove Theorem~\ref{theorem:saturation_point} using
Corollary~\ref{corollary:ac_blocks}.

\begin{proof}[Proof of Theorem~\ref{theorem:saturation_point}]
    Suppose that the digits $b, c_k$ generate a solution of the $P^*_k$ problem
    as per Lemma~\ref{lemma:last_block}; further suppose that they do \emph{not}
    generate a solution for the $P^*_{k - 1}$ problem.
    Then, Corollary~\ref{corollary:ac_blocks} guarantees that \[
        \frac{bc_kB^{k - 1}}{bB - (B - 1)c_k}
    \] is an integer. Since $b, c_k$ do not generate a solution for the $P^*_{k - 1}$
    problem, either \[
        \frac{bc_kB^{k - 2}}{bB - (B - 1)c_k}
    \] is not an integer, or the first block $a'$ is too small, i.e.\ the block \[ 
        a' = \frac{b(B^{k - 2} - 1)}{B - 1} + \frac{bc_kB^{k - 2}}{bB - (B - 1)c_k} 
        < B^{k - 2}.
    \] The divisibility conditions in the first case are enough to obtain certain relations
    between $k$ and the prime factors of $B$ and $bc_k$.
    The bound on $k$ in the second case requires only direct algebraic manipulation. \\
    
    Consider the former case, and factorize \[
        B = p_1^{\alpha_1}\cdots p_r^{\alpha_r}, \qquad
        bc_k = p_1^{\beta_1}\cdots p_r^{\beta_r},
    \] where $p_1, \dots, p_r$ are primes, and each $\alpha_i, \beta_i \geq 0$. Then, the
    denominator \[
        bB - (B - 1)c_k \mid bc_kB^{k - 1} = p_1^{\alpha_1(k - 1) + \beta_1}\cdots
        p_r^{\alpha_r(k - 1) + \beta_r},
    \] hence its prime factorization cannot have any primes apart from $p_1,\dots, p_r$.
    Write \[
        bB - (B - 1)c_k = p_1^{\gamma_1}\cdots p_r^{\gamma_r}
    \] for $\gamma_i \geq 0$. On the other hand, the denominator \[
        bB - (B - 1)c_k \nmid bc_kB^{k - 2} = p_1^{\alpha_1(k - 2) + \beta_1}\cdots
        p_r^{\alpha_r(k - 2) + \beta_r}.
    \] This means that we must have some $j$ for which $\gamma_j > \alpha_j(k - 2) + 
    \beta_j$. Furthermore, $\gamma_j \leq \alpha_j(k - 1) + \beta_j$ so as to satisfy
    the first divisibility, ensuring that $\alpha_j \neq 0$. Now, \[
        (B - 1)^2 \geq (B - 1)(B - c_k) \geq bB - (B - 1)c_k \geq p_j^{\gamma_j} 
        > p_j^{\alpha_j(k - 2) + \beta_j},
    \] which gives \[
        (B - 1)^2 > p_j^{\alpha_j(k - 2) + \beta_j} \geq 2^{\alpha_j(k - 2)}, \qquad
        2\log_2(B - 1) > \alpha_j(k - 2).
    \] Since $\alpha_j \geq 1$, \[
        k - 2 < \frac{2\log_2(B - 1)}{\alpha_j} \leq 2\log_2(B - 1), \qquad
        k < 2\log_2(B - 1) + 2.
    \]
    
    Next, we consider the latter case where the block $a'$ for the $P^*_{k - 1}$ problem is
    too small, hence \[
        a' = \frac{b(B^{k - 2} - 1)}{B - 1} + \frac{bc_kB^{k - 2}}{bB - (B - 1)c_k} < 
        B^{k - 2}.
    \] However, the corresponding block $a$ for the $P^*_k$ problem is of the right size,
    hence \[
        a = \frac{b(B^{k - 1} - 1)}{B - 1} + \frac{bc_kB^{k - 1}}{bB - (B - 1)c_k} \geq
        B^{k - 1}.
    \] Collecting the powers of $B$, we have the equations \begin{align*}
        B^{k - 2}\left[\frac{b^2B}{(B - 1)(bB - (B - 1)c_k)} - 1\right] 
        &< \frac{b}{B - 1}, \\
        B^{k - 1}\left[\frac{b^2B}{(B - 1)(bB - (B - 1)c_k)} - 1\right] 
        &\geq \frac{b}{B - 1}.
    \end{align*} Note that the term inside the square bracket must be positive by the
    second equation. Thus, we can divide the first equation by this and obtain the 
    estimate \begin{align*}
        B^{k - 2} &< \frac{b}{B - 1}\cdot 
        \frac{(B - 1)(bB - (B - 1)c_k)}{b^2B - (B - 1)(bB - (B - 1)c_k)} \\
        &= \frac{b^2B - (B - 1)bc_k}{b^2B - bB(B -1) + (B - 1)^2c_k} \\
        &\leq b^2B - (B - 1)bc_k \\
        &\leq (B - 1)^3.
    \end{align*}
    Thus, \[
        (B - 1)^{k - 2} < B^{k - 2} \leq (B - 1)^3, \qquad
        k - 2 < 3, \qquad
        k < 5.
    \] This means that new solutions cannot appear with increasing $k$,
    when \[
        k \geq 2\log_2(B - 1) + 2,\quad \text{ and }\quad k \geq 5,
    \] that is, \[
        k \geq \max\{5,\, 2\log_2(B - 1) + 2\}. 
    \]
\end{proof}





\section{Powers of primes.}

In this section, we examine solutions of $P^*_k$ where the base $B = p^n$ for
prime $p$, $n \geq 2$.

\begin{lemma} \label{lemma:pn_extension}
    Let $B = p^n$ where $p$ is prime, $n > 1$. If $N_k$ has property $P^*_k$ for 
    $k \geq 1$, then $a_{i \neq 1} = b = c_{i \neq k}$. Furthermore, $p \mid c_k$.
\end{lemma}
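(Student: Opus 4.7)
The plan is to reduce most of the statement to lemmas already proved in the preceding sections, and then tackle the one remaining piece—the structure of the digits of $a$—using a $p$-adic divisibility argument applied to the formula from Corollary~\ref{corollary:ac_blocks}.

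First, the parts of the claim that come for free. The trailing-block identity $c_1 = c_2 = \dots = c_{k-1} = b$ is exactly Lemma~\ref{lemma:last_block}. The assertion $p \mid c_k$ follows immediately from Lemma~\ref{lemma:gcds}, which yields $\gcd(c_k, B) > 1$: since $B = p^n$ has $p$ as its unique prime factor, $p \mid c_k$. The case $k = 1$ is now fully handled, as the constraints on $a_{i\neq 1}$ and $c_{i\neq k}$ are vacuous. So the only genuinely new content lies in showing $a_2 = a_3 = \dots = a_k = b$ when $k \geq 2$.

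For this, I would use the formula
\[
    a = \frac{bM}{B} + \frac{bc_k B^{k-1}}{D}, \qquad D := bB - (B-1)c_k,
\]
from Corollary~\ref{corollary:ac_blocks}. The first term equals $b(B^{k-1}-1)/(B-1) = b\bigl[\underbrace{11\cdots 1}_{k-1}\bigr]$ in base $B$, a nonnegative integer strictly less than $B^{k-1}$ whose $k-1$ digits are all $b$. If I can show $B^{k-1}$ divides $bc_k B^{k-1}/D$, then reducing $a$ modulo $B^{k-1}$ returns exactly $bM/B$, which forces the last $k-1$ digits of $a$ to all be $b$; the leading digit is then $a_1 = bc_k/D$, automatically a single digit by Lemma~\ref{lemma:leading_digit}. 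Thus the whole problem collapses to proving $D \mid bc_k$.

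To establish $D \mid bc_k$, I would carry out $p$-adic bookkeeping. Write $b = p^\gamma b'$ and $c_k = p^\beta c_k'$ with $\gcd(b'c_k', p) = 1$; the already-established $p \mid c_k$ gives $\beta \geq 1$, while $0 < c_k < B = p^n$ forces $\beta < n$, so in particular $\beta < n + \gamma$. Factoring gives $D = p^\beta\bigl(p^{n+\gamma-\beta} b' - (p^n - 1)c_k'\bigr) = p^\beta E$, and since $n + \gamma - \beta \geq 1$ the cofactor reduces to $E \equiv c_k' \not\equiv 0 \pmod p$, so $\gcd(E, p) = 1$. Corollary~\ref{corollary:ac_blocks} already guarantees $D \mid bc_k B^{k-1}$; stripping off all powers of $p$ and using $\gcd(E, p) = 1$ forces $E \mid b'c_k'$, and then $D = p^\beta E \mid p^{\gamma+\beta} b' c_k' = bc_k$. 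The main obstacle is getting the $p$-adic valuations to line up — ruling out the alternative regime $\beta \geq n + \gamma$, in which one would have $v_p(D) > v_p(bc_k)$ and divisibility would fail. This is precisely where the base being a genuine prime power matters: the bound $c_k < p^n$ (and not any coarser estimate) is what closes off this case.
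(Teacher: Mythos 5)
Your proposal is correct and follows essentially the same route as the paper: both reduce the claim to showing that $D = bB-(B-1)c_k$ divides $bc_k$ (not merely $bc_kB^{k-1}$) via a $p$-adic valuation argument that exploits $v_p(c_k)<n$ forced by $0<c_k<p^n$, after dispatching $p\mid c_k$ and the $c$-block structure with Lemmas~\ref{lemma:gcds} and~\ref{lemma:last_block}. The only difference is in the finish: you read the digits of $a$ directly from the decomposition $a=(bc_k/D)B^{k-1}+b\,\frac{B^{k-1}-1}{B-1}$, whereas the paper packages the same divisibility fact as a $P^*_1$ solution $[a_*\,b\,c_k]$ with $a_*=bc_k/D$ and recovers $N_k$ as its extension via Proposition~\ref{proposition:extension}.
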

\begin{proof}
    Suppose that $N_k$ has property $P^*_k$ for $k > 1$. Then, $a, b, c > 0$ via
    Lemma~\ref{lemma:trivial_1}, the last digit $c_k > 0$, and $p\mid c_k$ using 
    Lemma~\ref{lemma:gcds}. By Corollary~\ref{corollary:ac_blocks}, write \[
        (a, b, c) = \left(\frac{bM}{B} + \frac{bc_kB^{k - 1}}{bB - (B - 1)c_k}, 
        b, bM + c_k\right), \qquad
        M = \frac{B^k - B}{B - 1}.
    \] Specifically, \[
        \frac{bc_k B^{k - 1}}{bB - (B - 1)c_k} = q
    \] is an integer. Write $B = p^n$, and $c_k = p^rc_k'$, $q = p^sq'$, $b = p^t b'$ with 
    $p\nmid c_k', q', b'$. Since $c_k < B = p^n$ is a single digit, we must have $r < n$.
    Now, we have \[
        bc_k p^{n(k - 1)} = q(bp^n - (p^n - 1)c_k) = q((b - c_k)p^n + c_k),
    \] hence \[
        bc_k' p^rp^{n(k - 1)} = q((b - c_k)p^n + p^rc_k'), \qquad
        bc_k' p^{n(k - 1)} = q((b - c_k)p^{n - r} + c_k').
    \] Now, \[
        b'c_k'p^t p^{n(k - 1)} = q'p^s((b - c_k)p^{n - r} + c_k'),
    \] hence \[
        b'c_k' p^{n(k - 1) + t - s} = q'(b - c_k)p^{n - r} + q'c_k'.
    \] Note that we have integers on both sides. Since $r < n$, we have 
    $p\mid q'(b - c_k)p^{n - r}$; but by construction, $p\nmid q'c_k'$. Thus, the left
    hand side $p\nmid b'c_k' p^{n(k - 1) + t - s}$. Again, $p \nmid b'c_k'$, hence we have
    $s = n(k - 1) + t$. Thus, \[
        \frac{bc_k}{bB - (B - 1)c_k} = \frac{q}{p^{n(k - 1)}} = q' p^{s - n(k - 1)} = q' p^t
    \] is an integer.

    Now, the number $N_* = [a_*\,b_*\,c_*]$ where \[
        (a_*, b_*, c_*) = \left(\frac{bc_k}{bB - (B - 1)c_k}, b, c_k\right)
    \] has property $P^*_1$. To see this, note that $c_k < b$ gives \[
        a_* = \frac{bc_k}{bB - (B - 1)c_k} < \frac{bc_k}{bB - (B - 1)b} = c_k < B
    \] ensuring that $a_*$ is a single digit, and that \[
        \frac{1}{a_*} + \frac{B - 1}{b_*} = \frac{bB - (B - 1)c_k}{bc_k} + \frac{B - 1}{b} = 
        \frac{B}{c_k} = \frac{B^1}{c_*},
    \] satisfying the $P_k$ property.

     By Proposition~\ref{proposition:extension}, its extension \[
        N_*^+ = [a_*b_*\dots b_*\, b_*\,b_*\dots b_*c] = 
        [a_*b\dots b\, b\, b\dots b c_k] = [a_*^+\, b\, c]
    \] has property $P^*_k$. However, the digits $b, c_k$ uniquely determine the 
    first block $a_*^+$, and we already have a solution $N = [a\, b\, c]$ generated 
    by $b, c_k$. This forces $a = a_*^+$, hence all $a_{i \neq 1} = b$ as desired.
    In other words, our original solution $N$ for $P_k$ is an extension of the 
    solution $N_*$ for $P_1$.
\end{proof}

\end{appendices}


\begin{thebibliography}{99}
\bibitem{shola} Sholander, M. (1961). Problems and Questions. \textit{Mathematics Magazine}, 34(6): 363–-373. doi:10.1080/0025570x.1961.11975268

\bibitem{boas} Boas, R. P. (1972). Anomalous Cancellation. \textit{The Two-Year College Mathematics Journal}, 3(2): 21--24.
doi:10.2307/3026913

\bibitem{ogil} Ogilvy, C.S, Anderson J. T. (1996). Excursions in Number Theory. \textit{Oxford University Press, New York}.

\bibitem{oei} N. J. A. Sloane, The On Line Encyclopedia of Integer Sequences, https://oeis.org

\bibitem{yt} Michael Glickman. Think Logically! 
Anomalous Cancellation in Depth, 07/23/2021, Challenging Math video, 41:02, https://youtu.be/0\_PCqkbGs5E

\bibitem{ekh} Ekhad, Shalosh B. (2017). Automated Generation of Anomalous Cancellations. \textit{arXiv}. doi:10.48550/ARXIV.1709.03379


\bibitem{gold} Fried, M. N., \& Goldberg, M. (2010). A Pumping Lemma for Invalid Reductions of Fractions.
\textit{The College Mathematics Journal}, 41(5): 357–364. 
doi:10.4169/074683410x521955

\bibitem{stuff} Stuffelbeam, R. (2013). How Weird Are Weird Fractions? \textit{The College Mathematics Journal}, 44(3): 202–-209. doi:10.4169/college.math.j.44.3.2

\end{thebibliography}

\end{document}